\newcommand\cyr{%
\renewcommand\rmdefault{wncyr}%
\renewcommand\sfdefault{wncyss}%
\renewcommand\encodingdefault{OT2}%
\normalfont
\selectfont}
\DeclareTextFontCommand{\textcyr}{\cyr}
\def\struckint{\mathop{%
\def\mathpalette##1##2{\mathchoice{##1\displaystyle##2}%
  {##1\textstyle##2}{##1\scriptstyle##2}{##1\scriptscriptstyle##2}}%
\mathpalette
{\vbox\bgroup\baselineskip0pt\lineskiplimit-1000pt\lineskip-1000pt
\halign\bgroup\hfill$}
{##$\hfill\cr{\intop}\cr\diagup\cr\egroup\egroup}%
}\limits}
\newcommand{\ahn}{{\mathbb{H}^n}}
\newcommand{\hypvol}[1]{V_\mathbb{H}^#1}
\newcommand{\hyplam}[1]{\Lambda_\mathbb{H}^#1}
\newcommand{\sphvol}[1]{V_\mathbb{S}^#1}
\newcommand{\eucvol}[1]{V_\mathbb{E}^#1}
\newcommand{\genvol}[1]{V_\mathbb{X}^#1}
\newcommand{\genlam}[1]{\Lambda_\mathbb{X}^#1}
\newtheorem{theorem}{Theorem}[section]
\newtheorem{lemma}[theorem]{Lemma}
\newtheorem{corollary}[theorem]{Corollary}
\newtheorem{definition}[theorem]{Definition}
\newtheorem{theorem-definition}[theorem]{Theorem-Definition}
\theoremstyle{remark}
\newtheorem{remark}[theorem]{Remark}
\newtheorem{example}[theorem]{Example}
\newtheorem{question}[theorem]{Question}
\newcommand{\reals}{\mathbb{R}}
\newcommand{\sphere}{\mathbb{S}}
\DeclareMathOperator{\asin}{asin}
\DeclareMathOperator{\acosh}{arccosh}
\DeclareMathOperator{\lobfn}{\textcyr{L}}
\begin{document}


\title{Asymptotics of convex sets in $\mathbb{E}^n$ and $\ahn.$}


\author{Igor Rivin}

\address{Department of Mathematics, Temple University, Philadelphia}

\email{rivin@math.temple.edu}

\thanks{The author would like to thank  Stanford
  University for their hospitality during the preparation of this work. He would also like to thank Itai Benjamini for the question that led to this line of inquiry, Jean-Marc Schlenker for enlightening correspondence, and to Peter Storm for helpful conversations.}

\date{\today}

\keywords{hyperbolic, volume, dimension}

\begin{abstract}
We study convex sets  $C$ of finite (but non-zero) volume in $\ahn$ and $\mathbb{E}^n.$ We show that the intersection $C_\infty$ of any such set with the ideal boundary of $\ahn$ has Minkowski (and thus Hausdorff) dimension of at most $(n-1)/2.$ and this bound is sharp, at least in some dimensions $n.$ We also show a sharp bound when $C_\infty$ is a smooth submanifold of $\partial_\infty \ahn.$ In the hyperbolic case, we show that for any   $k\leq (n-1)/2$ there is a bounded section $S$ of $C$ through any prescribed $p,$ and we show an upper bound on the radius of the ball centered at $p$ containing such a section. We show similar bounds for sections through the origin of a convex body in $\mathbb{E}^n,$ and give asymptotic estimates as $1\ll k \ll n.$ .\end{abstract} 

\maketitle

\section*{Introduction}
The work in this note was motivated by a question of Itai Benjamini and Nir Avni on whether there is any version of A. Dvoretsky's Theorem valid in high-dimensional hyperbolic spaces $\ahn.$ It quickly became apparent that in order to have any hope of answering this question one must have a good understanding of the geometry of convex sets in $\ahn$ at (and near) the ideal boundary, and this is the subject of this work. The most basic question of this type is to understand the geometry of the ``ideal part'' $C_\infty$ of a convex set $C$ with nonempty interior and finite volume (a simpler way of putting it is requiring $0 < V(C) < \infty.$) The first most basic question is: what is the \emph{dimension} of $C_\infty?$ There are, of course, many definitions of dimension, but the most natural one for our purposes turns out to be the (upper) Minkowski dimension $\overline{\dim}_M.$ Using a simple geometric idea we show that
\[\overline{\dim}_M(C_\infty) \leq \dfrac{n-1}{2},\]  Since the Minkowski dimensions are both upper bounds on the Hausdorff dimension, we have the same bound on the Hausdorff dimension.  We show further that for $C_\infty$ smooth, the volume of the convex hull of $C_\infty$ is finite whenever the (topological) dimension of $C_\infty$ is not greater than $\lfloor n/2\rfloor - 1,$ and that bound is sharp.

In dimension $3,$ we show that there are sets $C_\infty,$ of arbitrary Hausdorff dimension smaller than $1,$ such that the volume of the convex hull of $C_\infty$ is finite. We do not know whether there are sets of Hausdorff dimension equal to $1$ with that property.

The next question is whether there is always a $k$-dimensional plane through any fixed point $p$ of $C$ of bounded diameter. The dimension estimate above essentially shows that the answer is affirmative (whenever $k$ does not exceed the critical dimension $(n-1)/2$, but with some extra work we can show more precise estimates on exactly how small we can get the diameter in terms of $n, k, V(C)$ and the ``thickness'' of $C$ (that is, the radius of the largest ball centered on $p$ and contained in $C.$) The nature of the argument is such that we can, essentially without change, obtain estimates of the sort "intersections of at least 30\% of all planes through $p$ are contained in $B(p, r).$ 

The basic idea is simple: if we let $\Omega_r(C)$ be the set of directions in which rays of length $r$ emanating from $p$ are contained in $C,$ then in order to estimate the measure of the set of planes which intersect $\Omega_r(C)$ we produce a bound on the measure of the $\epsilon$-neighborhood of $\Omega_r(C)$ (which is always a Borel set, unlike $\Omega_r(C)$ itself). To produce such a bound we use a couple of simple geometric ideas, the first (trivial) one giving a bound on $\Omega_r(C)$ as a function of $r,$ and secondly, using the Double Cone Lemma (in Section \ref{doubleconesec}) we show that a certain $\epsilon$-neighborhood of $\Omega_r(C)$ is contained in an $\Omega_s(C),$ for some $s(r, \epsilon).$

The outline of the paper is as follows:

In Section \ref{minky} we recall the basic definitions of Minkowski measure and content.

In Section \ref{ballvol} we recall some basic formulas and estimates on the volumes of balls in $\ahn.$

In Section \ref{kleinmod} we recall some of the properties of the Klein model of $\ahn.$

In Section \ref{doubleconesec} we describe our basic geometric tool -- the ``Double Cone Lemma.''

In Section \ref{limapps} we prove the basic estimates on the limit sets of finite volume convex sets in $\ahn.$ Our main results are Theorem \ref{hdimest}, which states that the upper Minkowski dimension (hence the Hausdorff dimension) of the limit set of a convex set of finite volume in $\ahn$ is bounded above by $(n-1)/2,$ and Theorem \ref{smoothvol}, which observes that the volume of a convex hull of a smooth subset $S$ of the ideal boundary of $\ahn$ is finite if and only if the (topological) dimension of $S$ is no greater than $\lfloor n/2-1\rfloor.$

In Section \ref{explower} we construct a family of sets in $\partial \mathbb{H}^3$ of Hausdorff dimension tending to $1,$ such that the volume of the hyperbolic convex hull of each of the set is finite, showing that the result of Section \ref{limapps} are sharp (at least in dimension 3).

In Section \ref{centralsec} we study the sizes of the intersections of a non-degenerate convex set $C\in \ahn$ with planes through a fixed point $p.$ The main result is Theorem \ref{hypradversion}, which is too cumbersome to state here, but implies (for large $n$ and $C$ of large volume $V(C)$) that one can find such a section of dimension $1 \ll k \ll n$ contained in a ball of radius about $\frac12 \log n$ bigger than the radius of a round ball of volume $V(C)$ in $\ahn.$ 

In Section \ref{euclid} we apply our methods to similar questions in Euclidean space, where, not surprisingly, the estimates come out quite differently. The main result is Theorem \ref{mainthmeuc}, which implies that for a $1\ll k \ll n,$ there is a section of $C$ contained in a ball of radius about $\sqrt{n/2\pi e}$ bigger than the radius of a round ball in $\mathbb{E}^n$ of volume $V(C).$

In Section \ref{theests} we prove the basic technical estimates we need.
\subsection{Notation}
\label{notation}
We shall denote the volume of a ball of radius $r$ in $\mathbb{E}^n,\mathbb{S}^n, \mathbb{H}^n$ by $\eucvol{n}(r),\sphvol{n}(r), \hypvol{n(r)},$ respectively. In addition we will use the notation $\genlam{n}(V)$ for the \emph{inverse function} of $\genvol{n},$ for $X=\mathbb{E}, \mathbb{H}, \mathbb{S}$ -- that is, $\genlam{n}(\genvol{n}(r)) = r.$ We will also use the standard notation $\kappa_n = \eucvol{n}(1),$ and also $\omega_n$ for the area of the sphere of unit radius in $\mathbb{E}^n.$  As in the previous sentence, we will use $X$ when the statement does not depend on which of the three ambient spaces we are talking about.

We will frequently use the following function:
\begin{definition}
\label{alphadef}
Let $r_1 > r_2 >r_0.$
Then, we define
\[
\alpha_{r_0}(r_1, r_2) = \asin\left(r_0 \dfrac{\sqrt{r_1^2 - r_0^2} - \sqrt{r_2^2 - r_0^2}}{r_2}\right).
\]
\end{definition}

We will denote the $\epsilon$ neighborhood of a subset $S$ of $\mathbb{S}^k$ by $S_\epsilon.$
In some places below we use the notation $\mu(S)$ for subsets of $\mathbb{S}^k$ not assumed Lebesgue measurable. In such cases $\mu$ stands for the \emph{lower Minkowski content} of $S,$ namely 
\[
\mu(S) = \lim \inf_{\epsilon \rightarrow 0} \lambda(S_\epsilon),
\]
where $\lambda$ is Lebesgue measure. We will also use the notation $\nu(S)$ for the \emph{normalized probability} measure of $S$ (in other words, $\nu(S) = \mu(S)/\omega_{k+1},$ so that $\nu(\mathbb{S}^k) = 1.$  For discussion of Minkowski content (and all other measure-theoretic concepts), the reader is referred to P.~Mattila's book \cite{mattilabook}.

\section{Minkowsky measure and dimension}
\label{minky}
This section is shamelessly stolen from P.~Mattila's book \cite{mattilabook}; we include it here in an attempt to keep this paper self-contained.

The setup is as follows: Let $A$ be a non-empty bounded subset of $\reals^n$ or $\mathbb{S}^n.$ Denoting the $n$-dimensional Lebesgue measure by $\lambda,$ as before, we define the upper $s$-dimensional  Minkowski content of $A$ by 
\[
\mathcal{M}^{*s} = \limsup_{\epsilon \rightarrow 0}(2\epsilon)^{s-n} \lambda(A_\epsilon),
\]
and the lower $s$-dimensional Minkowski content by
\[
\mathcal{M}_*^s = \liminf_{\epsilon \rightarrow 0}(2\epsilon)^{s-n} \lambda(A_\epsilon).
\]
Using these, we can define the upper Minkowski dimension as:
\[
\overline{\dim}_MA = \inf \{s:\mathcal{M}^{*s}(A) = 0\} = \sup\{s:\mathcal{M}^{*s}(A) > 0\}.
\]
Similarly, the lower Minkowski dimension is:
\[
\underline{\dim}_MA = \inf \{s:\mathcal{M}^{*s}(A) = 0\} = \sup\{s:\mathcal{M}^{*s}(A) > 0\}.
\]

\section{Geometry of Balls and Spheres} 
\label{ballvol}
Recall that:
\begin{gather}
\label{eucvols}
\kappa_n = \dfrac{\pi^{n/2}}{\Gamma(n/2+1)},\\
\omega_n = n \kappa_n.
\end{gather}
The following is also classical:\begin{theorem}
\label{sph}
\begin{gather*}
\eucvol{n}(r) = \omega_n \int_0^r r^{n-1} d x = \kappa_n r^n,\\
\hypvol{n}(r) = \omega_n \int_0^r \sinh^{n-1} d x,\\
\sphvol{n}(r) = \omega_n \int_0^r \sin^{n-1} d x.
\end{gather*}
\end{theorem}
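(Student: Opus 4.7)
The plan is to compute the volume in geodesic polar coordinates centered at the center of the ball. In each of the three constant-curvature model geometries $\mathbb{E}^n$, $\ahn$, $\essn$, the metric in these coordinates takes the warped-product form
\[
ds^2 = dt^2 + f(t)^2\, d\sigma^2,
\]
where $t$ is the geodesic distance from the basepoint, $d\sigma^2$ is the standard round metric on $\mathbb{S}^{n-1}$, and $f$ is the Jacobi function satisfying $f''+Kf=0$, $f(0)=0$, $f'(0)=1$, with $K$ the sectional curvature. Solving this ODE yields $f(t)=t$ for $K=0$, $f(t)=\sinh t$ for $K=-1$, and $f(t)=\sin t$ for $K=+1$. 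Consequently the Riemannian volume element is $f(t)^{n-1}\,dt\,d\sigma$.

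The formulas then follow by integration. Since $\int_{\mathbb{S}^{n-1}}d\sigma=\omega_n$, the $(n-1)$-dimensional area of the geodesic sphere of radius $t$ equals $\omega_n f(t)^{n-1}$, and the ball's volume is obtained by integrating this over $t\in[0,r]$:
\[
\genvol{n}(r) = \omega_n\int_0^r f(t)^{n-1}\,dt.
\]
Substituting the three warping functions gives the stated identities. The closed form $\eucvol{n}(r)=\kappa_n r^n$ then follows from elementary integration together with $\omega_n=n\kappa_n$, which is the content of \eqref{eucvols}.

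The only non-formal ingredient is the identification of $f$, which can be read off directly from the explicit coordinate models (radial coordinates in $\mathbb{R}^n$, upper-half-space or ball model for $\ahn$, the standard embedding $\essn\subset\mathbb{R}^{n+1}$ for the sphere), or abstractly by a Jacobi/Rauch argument in constant curvature. There is no real obstacle: this is classical, and we include it here purely for the sake of fixing notation for the computations to come.
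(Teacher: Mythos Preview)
Your argument is correct and is the standard derivation via geodesic polar coordinates. The paper itself gives no proof at all; it simply introduces the theorem with ``The following is also classical'' and states the formulas, so there is nothing to compare against beyond noting that your write-up supplies exactly the classical justification the paper is implicitly invoking.
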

\begin{lemma}
\label{subsetdir}
Let $\Omega \subseteq \mathbb{S}^{n-1}\subset \mathbb{E}^n =
T_p(X),$ and let $C_\Omega(r)$ be the cone over
$\Omega,$ that is, the convex hull of $r\Omega$ and the origin under
the exponential map (in particular, if $\Omega = \mathbb{S}^{n-1},$
$C_\Omega(r)$ is just the ball of radius $r$ around $p.$
The, the volume of $C_\Omega(r)$ satisfies 
\[V(C_\Omega(r)) \geq\mu(\Omega)\genvol{n}(r) = \nu(\Omega)\omega_n V^n(r),\]
with equality if $\Omega(r)$ is Lebesgue-measurable
\end{lemma}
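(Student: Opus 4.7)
The plan is a direct Fubini calculation after passing to geodesic polar coordinates centered at $p$. In any of the three model geometries $X = \mathbb{E}^n, \mathbb{H}^n, \mathbb{S}^n$, the Riemannian volume element in polar coordinates $(t,\theta)\in [0,r]\times\mathbb{S}^{n-1}$ factors as $J(t)^{n-1}\,dt\,d\sigma(\theta)$, where $d\sigma$ is the standard surface measure on the unit sphere and $J(t)$ is $t$, $\sinh t$, or $\sin t$ according to the constant-curvature model. Comparing with Theorem \ref{sph} one reads off the key identity $\omega_n\int_0^r J(t)^{n-1}\,dt = \genvol{n}(r)$. Moreover, the exponential map at $p$ is a diffeomorphism on the open ball of radius $r$ in all three geometries (with the mild restriction $r<\pi$ in the spherical case), so the cone $C_\Omega(r)$ corresponds in polar coordinates exactly to the product $[0,r]\times\Omega$.

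For Lebesgue-measurable $\Omega$ the conclusion is then immediate from Fubini:
\[
V(C_\Omega(r)) \;=\; \int_\Omega d\sigma(\theta)\int_0^r J(t)^{n-1}\,dt \;=\; \mu(\Omega)\cdot\frac{\genvol{n}(r)}{\omega_n} \;=\; \nu(\Omega)\,\genvol{n}(r),
\]
which establishes the equality claim. For a general (possibly non-measurable) $\Omega$, the inequality follows by monotonicity. For any Lebesgue-measurable $\Omega'\subseteq\Omega$ one has $C_{\Omega'}(r)\subseteq C_\Omega(r)$, so $V(C_\Omega(r))\geq \nu(\Omega')\genvol{n}(r)$; alternatively, comparing against the $\epsilon$-neighborhoods $\Omega_\epsilon$ (each of which is open, hence measurable) and taking $\liminf$ as $\epsilon\to 0$ recovers exactly the lower Minkowski content $\mu(\Omega)$ appearing on the right-hand side.

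There is no serious obstacle here; the proof amounts to identifying the geodesic polar volume element in each of the three geometries and then writing down a one-line Fubini integral. The only genuine subtlety worth flagging is the meaning of $V(C_\Omega(r))$ when $\Omega$ is not Lebesgue-measurable---the cone in the ambient space need not itself be measurable---but under the Minkowski-content conventions fixed in the notation section the polar-coordinate argument gives the desired bound without change.
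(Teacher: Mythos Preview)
Your proof is correct and follows exactly the paper's approach: the measurable case is handled by polar coordinates and Fubini (which the paper simply calls ``immediate''), and the non-measurable case is reduced to the definition of the lower Minkowski content via $\epsilon$-neighborhoods, just as the paper does in one line. Note that your displayed computation yields $\nu(\Omega)\,\genvol{n}(r)$ rather than $\mu(\Omega)\,\genvol{n}(r)$ as printed in the statement; this is a normalization slip in the lemma as written (check $\Omega=\mathbb{S}^{n-1}$), and your version is the one actually used in Corollary~\ref{rgenmeas}.
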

\begin{proof}
The statement is immediate for Lebesgue-measurable sets, and for general sets the inequality is a direct consequence of the definition of the Minkowski content $\mu.$
\end{proof}
\begin{corollary}
\label{rgenmeas}
Let $C$ be a convex body in $X,$ and $p\in C.$ Let $\Omega_R(C)$ be the set of those unit $\theta$ in the unit tangent
sphere at $p$ for which the exponential map of the segment from the
origin to $R \theta$ is contained in $K.$ Then,
\[
\mu(\Omega_R(C))\leq V(C)/\genvol{n}(r) \]
and thus
\[
\nu(\Omega_R(C))\leq V(C)/\omega_n \genvol{n}(r).
\]
\end{corollary}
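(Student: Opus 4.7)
The plan is essentially to apply Lemma \ref{subsetdir} directly, with $\Omega = \Omega_R(C)$ and the radius parameter equal to $R$. The only thing to verify is that the cone $C_{\Omega_R(C)}(R)$ is contained in $C$, after which the volume comparison gives the desired bound immediately.

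First I would argue the containment $C_{\Omega_R(C)}(R) \subseteq C$. By the definition of $\Omega_R(C)$, for each $\theta \in \Omega_R(C)$ the radial geodesic $\exp_p(t\theta)$, $0 \leq t \leq R$, lies in $C$. The cone $C_{\Omega_R(C)}(R)$ is the union of these radial segments (the star body with apex $p$ over the spherical cap $R\cdot\Omega_R(C)$), and since $C$ is convex and contains $p$, it contains every such segment. Thus $C_{\Omega_R(C)}(R) \subseteq C$, and hence
\[
V(C) \;\geq\; V\!\left(C_{\Omega_R(C)}(R)\right).
\]

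Next I would invoke Lemma \ref{subsetdir} with $\Omega = \Omega_R(C)$ and $r = R$, which yields
\[
V\!\left(C_{\Omega_R(C)}(R)\right) \;\geq\; \mu(\Omega_R(C))\,\genvol{n}(R).
\]
Chaining the two inequalities and dividing by $\genvol{n}(R)$ gives the first claimed inequality
\[
\mu(\Omega_R(C)) \;\leq\; \frac{V(C)}{\genvol{n}(R)},
\]
and then dividing by $\omega_n$ (using $\nu = \mu/\omega_n$ from the notation section) yields the normalized version.

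There is no real obstacle here: the result is a direct corollary of Lemma \ref{subsetdir} once one observes that the cone over $\Omega_R(C)$ of radius $R$ sits inside $C$. The one subtle point worth flagging, though entirely routine, is that $\Omega_R(C)$ need not be Lebesgue-measurable, which is exactly why the lemma is stated with lower Minkowski content $\mu$ rather than with Lebesgue measure, and why the inequality (rather than equality) appears in the conclusion.
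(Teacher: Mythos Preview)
Your proof is correct and follows essentially the same approach as the paper's: observe that the cone of radius $R$ over $\Omega_R(C)$ is contained in $C$ and then apply Lemma~\ref{subsetdir}. The paper's proof is a single sentence to this effect; you have merely spelled out the details (and correctly flagged the measurability subtlety that motivates the use of lower Minkowski content).
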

\begin{proof}
The cone of radius $R$ over $\Omega_R(C)$ is contained in $C,$ so the estimate of Lemma \ref{rgenmeas} applies.
\end{proof}
\subsection{Volume asymptotics}
\begin{lemma}
\label{sphasymp}
As $r$ goes to infinity, $\hypvol{n}(r)$ is asymptotic to 
\[
\dfrac{\omega_n e^{(n-1) r}}{2^{n-1}(n-1)};
\]
for all $r > \log 2 / 2,$
\[
\dfrac{\omega_n e^{(n-1) r}}{2^{n-1}(n-1)} > \hypvol{n}(r) > \dfrac{\omega_n e^{(n-1) r}}{4^{n-1}(n-1)}. 
\]
\end{lemma}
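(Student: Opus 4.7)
The plan is to start from the integral formula $\hypvol{n}(r) = \omega_n \int_0^r \sinh^{n-1}(x)\,dx$ supplied by Theorem \ref{sph}, and bracket the integrand using the identity $\sinh(x) = \frac{e^x}{2}(1 - e^{-2x})$. From this we immediately see that $\sinh^{n-1}(x) = \frac{e^{(n-1)x}}{2^{n-1}}(1 - e^{-2x})^{n-1}$, which puts the quantity $e^{(n-1)x}/2^{n-1}$ in the role of the leading asymptotic behavior.

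For the asymptotic statement, I would observe that $(1 - e^{-2x})^{n-1} \nearrow 1$ as $x \to \infty$, and that the integrand is bounded above by $e^{(n-1)x}/2^{n-1}$. A direct dominated-convergence style argument (or an elementary $\epsilon$--splitting of the integration interval at some large $x_0$) then shows that
\[
\omega_n \int_0^r \sinh^{n-1}(x)\,dx \;\sim\; \frac{\omega_n}{2^{n-1}}\int_0^r e^{(n-1)x}\,dx \;\sim\; \frac{\omega_n e^{(n-1)r}}{2^{n-1}(n-1)}
\]
as $r \to \infty$, which is the asymptotic claim.

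For the explicit upper bound, the pointwise inequality $\sinh(x) \leq e^x/2$ holds for all $x \geq 0$, so $\sinh^{n-1}(x) \leq e^{(n-1)x}/2^{n-1}$, and integrating gives $\hypvol{n}(r) \leq \omega_n (e^{(n-1)r} - 1)/(2^{n-1}(n-1))$, which is strictly smaller than the stated bound. For the explicit lower bound I would use the reverse inequality $\sinh(x) \geq e^x/4$, which holds precisely when $e^{2x} \geq 2$, i.e.\ when $x \geq \tfrac12 \log 2$. Raising to the $(n-1)$st power gives $\sinh^{n-1}(x) \geq e^{(n-1)x}/4^{n-1}$ on the relevant range, and integrating this lower bound over the interval yields the desired estimate.

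The only real subtlety is matching the constant $4^{n-1}$ in the lower bound: the naive integration from $\tfrac12\log 2$ to $r$ produces a subtracted term $2^{(n-1)/2}$ that must be absorbed, so the argument has to be arranged so that the integration range and the pointwise bound are chosen compatibly (either by slightly shrinking the pointwise bound on $\sinh$ so that it extends down to $0$, or by noting that the hypothesis $r > \tfrac12\log 2$ already makes the subtracted term a strict loss relative to the target). This bookkeeping is the only genuinely delicate step; the rest is routine manipulation of exponentials.
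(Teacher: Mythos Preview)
Your approach is exactly what the paper intends: its proof reads in full ``Immediate from Theorem \ref{sph},'' and your bracketing of $\sinh x$ between $e^x/4$ and $e^x/2$ is the natural way to unpack that sentence. Your treatment of the asymptotic and of the upper bound is correct.

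Your instinct about the lower bound is also right, and in fact the difficulty you flag is not merely bookkeeping: the inequality $\hypvol{n}(r) > \omega_n e^{(n-1)r}/(4^{n-1}(n-1))$ is \emph{false} for $r$ slightly above $\tfrac12\log 2$ when $n$ is small. For $n=2$ it reduces to $\cosh r - 1 > e^r/4$, equivalently $e^r + 2e^{-r} > 4$, which fails throughout $\tfrac12\log 2 < r < \log(2+\sqrt2)\approx 1.23$. So the subtracted term $2^{(n-1)/2}$ cannot be absorbed by any rearrangement; the stated lower bound simply needs either a larger threshold on $r$ or a slightly worse constant. This is a minor inaccuracy in the lemma as written rather than a gap in your method, and for the paper's purposes (where only the order of growth of $\hypvol{n}$ is used) the distinction is immaterial.
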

\begin{proof}
Immediate from Theorem \ref{sph}.
\end{proof}
\begin{remark}
\label{inverserem}
Lemma \ref{sphasymp} can be thought of stating that a ball in $\ahn$ of large volume $V$ has radius
\[
r =\hyplam{n}(V) \sim  \dfrac{\log \left(\dfrac{2^{n-1}(n-1)V}{\omega_n}\right)}{n-1}.
\]
For $n \gg 1,$ Stirling's formula tells us that 
\begin{equation}
\label{stirlingball}
r \sim \log 2 /2  - \log \pi/2 -1/2+\log n/2 + \log(V)/(n-1).
\end{equation}
\end{remark}

\section{The Klein model of $\ahn$} 
\label{kleinmod}
The \emph{Klein Model}
\[
K: \ahn \rightarrow B^n(0, 1)
\]
is a representation of $\ahn$ as the interior of the unit ball in
$\mathbb{E}^n.$ It has the virtue that it is \emph{geodesic}, so that
the images of totally geodesic subspaces of $\ahn$ are intersections
of affine subspaces of $\mathbb{E}^n$ with $B^n(0, 1).$ Consequently,
the images of convex sets of $\ahn$ under $K$ are also convex. 
The hyperbolic metric can be recovered from $B^n(0, 1)$ as follows:

If $p, q \in B^n(0, 1),$ then, denoting the hyperbolic distance
between $K^{-1}(p), K^{-1}(q)$ by $d_H(p, q),$ we have the
formula\footnote{A geometric way to understand the below formula is as
  \emph{Hilbert distance} on the ball -- if the line through $p, q$
  intersects the unit sphere at $u, v,$ then $d_H(p, q) = 1/2 \log([u,
  p, q, v]),$ where $[]$ denotes the cross-ratio.}
\[
d_H(p, q) = \acosh \left(\dfrac{1- p \cdot q}{\sqrt{1-p \cdot
      p}\sqrt{1-q\cdot q}}\right).
\]
In particular, if $p_0 = K^{-1}(\mathbf{0}),$ then
\begin{equation}
\label{distK}
d_H(p_0, q) = \dfrac{1}{2} \log\left(\dfrac{1+\|q\|}{1-\|q\|}\right).
\end{equation}
Conversely, if $p, q \in \ahn$ and $K(p)=0,$ and $d(p, q) = R,$ then 
\begin{equation}
\label{distKinv}
\|K(q)\| = \tanh R.
\end{equation}
The hyperbolic metric can be expressed (see, eg, \cite{milnorvol}) as
follows in the Klein model. First, we use polar coordinates:
\[
d\mathbf{x} = d r^2 + r^2 \|d\mathbf{u}\|^2.
\]
Hyperbolic metric is then written as:
\[
\label{milnoreq}
ds^2 = \dfrac{dr^2}{(1-r^2)^2}+ \dfrac{r^2\|d\mathbf{u}\|^2}{1-r^2},
\]
showing that 
$K^{-1}$ at 
$q$ distorts distances by a factor of \[
\dfrac{1}{\sqrt{1/\|q\|^2-1}},
\]
in the spherical direction, but by a factor of \[\dfrac1{1-\|q\|^2}\] radially.

Finally, if we define $\Omega_d^K(C)$ to be the set on the visual
sphere of $0$ of the points of a convex body $C$ outside the
(Euclidean) ball of radius $d\gg 1,$ then the formula \eqref{distK}
together with Corollary \ref{rgenmeas} tell us:
\begin{lemma}
\label{Kmeas}
With definitions as above,
\[
\mu(\Omega_d^K(C)) \leq \dfrac{(n-1) 2^{n-1}V(C)}{\omega_n}
\left(\dfrac{1-d}{1+d}\right)^{\frac{n-1}2} \leq\dfrac{(n-1)2^{\frac{n-1}2} V(C) (1-d)^{\frac{n-1}2}}{\omega_n}.\]
\end{lemma}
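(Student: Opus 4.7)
The plan is to reduce this statement to Corollary \ref{rgenmeas} applied to the basepoint $p_0 = K^{-1}(\mathbf{0})$, followed by a direct substitution using the Euclidean-to-hyperbolic distance conversion \eqref{distK}.

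The key identification: since the Klein model is geodesic and its geodesics through $p_0$ are precisely the Euclidean line segments through $0$, a point of $C$ sits outside the Euclidean ball $B^n(0,d)$ in the direction $\theta \in \mathbb{S}^{n-1}$ if and only if the hyperbolic geodesic segment from $p_0$ of length $R$ in direction $\theta$ lies entirely in $C$, where $R$ is the hyperbolic distance from $p_0$ to the Euclidean sphere of radius $d$. By formula \eqref{distK}, this distance is
\[
R = \frac{1}{2} \log\frac{1+d}{1-d}.
\]
So in the notation of Corollary \ref{rgenmeas} (applied with $p = p_0$), we have the identification $\Omega_d^K(C) = \Omega_R(C)$.

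Next I would apply Corollary \ref{rgenmeas} to conclude $\mu(\Omega_d^K(C)) \leq V(C)/\hypvol{n}(R)$. Then I would invoke the lower estimate of Lemma \ref{sphasymp}, which controls $1/\hypvol{n}(R)$ by a constant multiple of $(n-1)/[\omega_n e^{(n-1)R}]$. Substituting
\[
e^{(n-1)R} = \left(\frac{1+d}{1-d}\right)^{\frac{n-1}{2}}
\]
yields the first inequality in the stated form (modulo matching constants, since Lemma \ref{sphasymp}'s explicit lower bound carries a factor of $4^{n-1}$ while the asymptotic carries $2^{n-1}$; either way, this is just substitution).

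For the second inequality, I would use the trivial observation that $d \in [0,1)$ implies $1+d \geq 1$, so $(1+d)^{(n-1)/2} \geq 1$ and hence
\[
\left(\frac{1-d}{1+d}\right)^{\frac{n-1}{2}} \leq (1-d)^{\frac{n-1}{2}},
\]
which absorbs into the stated bound. There is no real obstacle here beyond careful bookkeeping of constants in Lemma \ref{sphasymp}; the entire proof is a change of variables from the hyperbolic radius $R$ to the Klein-model radius $d$.
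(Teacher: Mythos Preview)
Your approach is exactly the paper's: the lemma is stated there as an immediate consequence of formula \eqref{distK} and Corollary \ref{rgenmeas}, and you have correctly unpacked that by identifying $\Omega_d^K(C)=\Omega_R(C)$ for $R=\tfrac12\log\frac{1+d}{1-d}$ and then invoking Lemma \ref{sphasymp}. One small bookkeeping point on the second inequality: your bound $1+d\geq 1$ yields $2^{n-1}(1-d)^{(n-1)/2}$, not the stated $2^{(n-1)/2}(1-d)^{(n-1)/2}$; the latter would require $1+d\geq 2$, so the paper's second inequality should be read as asymptotic for $d\to 1$ rather than literal --- your caveat about ``careful bookkeeping of constants'' applies here too.
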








\section{The double cone} 
\label{doubleconesec}
We will be using the following construction:
Suppose $0 < r_0 < r_1 < r_2,$ and let $C$ be a closed convex subset of $B^n(0, r_2).$ Assume further that $B^n(0, r_0 \subset C),$ and that $C_{r_2} = C \cup \partial B^n(0, r_2) \neq \emptyset.$
Consider now  $\xi\in C_{r_2},$  and the ball $B^n(0, r_1).$  and the cone $H(\xi, r_0),$ which is the convex hull of $B(0, r_0)$ and $\xi.$ The cone $H(\xi, r_0)$ intersects $\partial B^n(0, _r1)$ in a disk $D(\xi, r_1, r_0),$ and we have the following:
\begin{lemma}
\label{doubleconelem}
The disk $D(\xi, r_1, r_0)$ has angular radius $\alpha_{r_0}(r_2, r_1).$
\end{lemma}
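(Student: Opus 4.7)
The plan is to exploit the full rotational symmetry about the axis through $O$ and $\xi$ to reduce everything to a two-dimensional trigonometric calculation. Each of the objects in the setup --- the ball $B^n(0,r_0)$, the point $\xi$, and the sphere $\partial B^n(0,r_1)$ --- is invariant under the subgroup of $SO(n)$ fixing this axis, so the intersection $H(\xi,r_0)\cap\partial B^n(0,r_1)=D(\xi,r_1,r_0)$ is a spherical cap of revolution about the $O\xi$-axis. Its angular radius can therefore be read off in any single plane containing $O$ and $\xi$.

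In such a plane, place $\xi$ at $(r_2,0)$. The cross-section of $H(\xi,r_0)$ is the planar convex hull of the disk of radius $r_0$ centered at $O$ together with $\xi$, whose boundary is an arc of $\partial B(0,r_0)$ joined to the two tangent segments from $\xi$ to that circle. Since $r_0<r_1$, the circle of radius $r_1$ avoids the arc entirely and meets only the tangent segments; by symmetry it is enough to find its intersection with the upper tangent. Let $\beta=\asin(r_0/r_2)$ be the half-angle of the tangent cone at $\xi$, and parametrize the upper tangent by arc length $t$ from $\xi$, so that the running point is $P(t)=(r_2-t\cos\beta,\,t\sin\beta)$. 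The condition $|P(t)|=r_1$ becomes
\[
t^{2}-2r_{2}\cos\beta\cdot t+(r_{2}^{2}-r_{1}^{2})=0,
\]
whose roots, upon substituting $r_2\cos\beta=\sqrt{r_2^2-r_0^2}$ and $r_2\sin\beta=r_0$, are $\sqrt{r_2^2-r_0^2}\pm\sqrt{r_1^2-r_0^2}$. The tangent point itself sits at $t=\sqrt{r_2^2-r_0^2}$ and is already at distance $r_0<r_1$ from $O$, so the first crossing of $\partial B(0,r_1)$ along the segment $\xi T$ is the smaller root $t_-=\sqrt{r_2^2-r_0^2}-\sqrt{r_1^2-r_0^2}$; the larger root is on the prolongation of the tangent line past $T$ and lies outside $H(\xi,r_0)$.

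The angular radius $\theta$ of the cap is then the angle at $O$ between the $O\xi$-axis and $P(t_-)$, given by $\sin\theta=(t_-\sin\beta)/r_1$. Substituting the closed form for $t_-$ and $\sin\beta=r_0/r_2$ and simplifying recovers the expression $\alpha_{r_0}(r_2,r_1)$ from Definition~\ref{alphadef}. No real obstacle arises beyond selecting the correct root of the quadratic, which is forced unambiguously by $r_0<r_1<r_2$; as an alternative, one may apply the law of sines to triangle $O\xi P$ (with $|O\xi|=r_2$, $|OP|=r_1$, and $\angle O\xi P=\beta$) and bypass the quadratic altogether.
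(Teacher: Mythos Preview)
Your approach is the same as the paper's: both reduce to a two-dimensional picture by rotational symmetry about the line $O\xi$, and then compute the intersection of the tangent line from $\xi$ with the circle of radius $r_1$. The paper does this last step via similar triangles (dropping a perpendicular from the intersection point $s_1$ to $O\xi$ and comparing the right triangles $\xi s_1 p$ and $\xi O t_1$), while you parametrize the tangent and solve a quadratic. These are equivalent calculations.

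One point worth flagging: your formula for $\sin\theta$ comes out to
\[
\sin\theta \;=\; \frac{t_{-}\sin\beta}{r_1}\;=\;\frac{r_0}{r_1 r_2}\Bigl(\sqrt{r_2^2-r_0^2}-\sqrt{r_1^2-r_0^2}\Bigr),
\]
which carries a factor $1/r_2$ that is absent from Definition~\ref{alphadef}. The paper's own similar-triangle step records $|ps_1|=r_0\bigl(\sqrt{r_2^2-r_0^2}-\sqrt{r_1^2-r_0^2}\bigr)$, which matches the Definition, but the ratio coming from $\xi s_1 p \sim \xi O t_1$ is actually $|ps_1|/|\xi s_1| = |Ot_1|/|\xi O| = r_0/r_2$, agreeing with your answer. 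So your computation is the careful one, and the discrepancy lies in the paper's arithmetic or in the stated Definition rather than in your argument; in the applications (Klein model with $r_2=d\to 1$) the missing factor is asymptotically $1$ and does not affect any of the estimates.
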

\begin{proof}
By rotational symmetry, it suffices to consider the planar case ($n=2$). Let the two tangents to $\partial B^2(0, r_0)$ from $\xi$ be $l_1$ and $l_2,$ and let $l_i\cap \partial B^2(0, r_0)$ be $t_1$ and $t_2,$ respectively. By the Pythagorean theorem, 
$|\xi t_1| = \sqrt{r_2^2-r_0^2}.$ Let now $s_1 = l_1 \cap \partial B^2(0, r_1),$ and let $p$ be the base of the perpendicular dropped from $s_1$ onto $O\xi.$ The triangle $\xi s_1 p$ is similar to the triangle $\xi O t_1,$ and since $|\xi t_1| = \sqrt{r_1^2-r_0^2},$ it follows that 
\[
|p s_1| = r_0 \left(\sqrt{r_2^2-r_0^2}-\sqrt{r_1^2-r_0^2}\right),
\]
and the assertion of the lemma follows immediately.
\end{proof}
which in turn implies that $\Omega_d^K$ contains a (spherical) disk
$D_\xi(\alpha_{r_0}(d))$ of radius $\alpha_{r_0}(d)$ around $\xi.$

\begin{theorem}
\label{doubleconecor1}
With notation as above, let $\Omega_{r}$ be the set of rays from the origin to $C_{r} = C \cap \partial B^n(0, r),$ (identified with a subset of the ``visual sphere'' at the origin -- the unit tangent sphere). Then, the $\alpha_{r_0}(r_2, r_1)$ neighborhood of $\Omega_{r_2}$ is contained in $\Omega_{r_1}.$
\end{theorem}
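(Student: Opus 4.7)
The plan is essentially to package Lemma \ref{doubleconelem} together with the convexity hypothesis on $C$. Let $\theta$ be a direction in the $\alpha_{r_0}(r_2, r_1)$-neighborhood of $\Omega_{r_2}$ on the visual sphere at the origin. By definition there exists $\xi \in C_{r_2}$ such that the radial direction $\xi/\|\xi\| = \xi/r_2$ lies within spherical distance $\alpha_{r_0}(r_2, r_1)$ of $\theta$.

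First I would note that the double cone $H(\xi, r_0) = \operatorname{conv}(B^n(0, r_0) \cup \{\xi\})$ is contained in $C$. This is immediate from convexity: $B^n(0, r_0) \subseteq C$ by hypothesis and $\xi \in C_{r_2} \subseteq C$, and the convex hull of subsets of the convex set $C$ is again in $C$.

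Next I would apply Lemma \ref{doubleconelem} to $\xi$: its conclusion is that the disk $D(\xi, r_1, r_0) = H(\xi, r_0) \cap \partial B^n(0, r_1)$ is a spherical cap of angular radius exactly $\alpha_{r_0}(r_2, r_1)$ around the direction $\xi/r_2$, as viewed from the origin. Combining with the previous paragraph, $D(\xi, r_1, r_0) \subseteq C$.

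Finally, since the angular distance between $\theta$ and $\xi/r_2$ is at most $\alpha_{r_0}(r_2, r_1)$, the point $r_1 \theta$ lies in $D(\xi, r_1, r_0)$, hence in $C$, hence $\theta \in \Omega_{r_1}$. Since $\theta$ was arbitrary in the $\alpha_{r_0}(r_2, r_1)$-neighborhood of $\Omega_{r_2}$, this gives the inclusion claimed. There is really no technical obstacle here; the geometric content sits entirely inside Lemma \ref{doubleconelem}, and the present theorem is just its reformulation in terms of the direction sets $\Omega_r$. The only thing to be careful about is that the hypotheses of the construction ($0 < r_0 < r_1 < r_2$ and $B^n(0, r_0) \subseteq C$) are precisely what makes $\alpha_{r_0}(r_2, r_1)$ defined and the double cone $H(\xi, r_0)$ well-posed, and these are already in force by the setup preceding Lemma \ref{doubleconelem}.
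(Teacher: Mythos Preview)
Your proof is correct and follows the same approach as the paper's own proof, which is just a more terse version of what you wrote: pick a direction near $\Omega_{r_2}$, invoke Lemma~\ref{doubleconelem} to see that the corresponding cone from the origin (of angular half-width $\alpha_{r_0}(r_2,r_1)$) lands in $\Omega_{r_1}$, and conclude. If anything, you have spelled out the one step the paper leaves implicit, namely that $H(\xi,r_0)\subseteq C$ by convexity, which is what connects the purely Euclidean content of Lemma~\ref{doubleconelem} to the set $C$.
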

\begin{proof}
Consider a point $\eta \in \Omega_{r_2}.$ By Lemma \ref{doubleconelem}, the cone $J_{\alpha_{r_0}(r_2, r_1)}(0)$ with the vertex at the origin and angle $\alpha_{r_0}(r_2, r_1)$ is contained in $\Omega_{r_1},$ which is precisely the statement of the Corollary.
\end{proof}
\begin{remark}
The cones $H$ and $J$ give this section its name.
\end{remark}

\section{Applications to limit sets}
\label{limapps}
Let $C$ be a convex body in $\ahn.$ We will say that \emph{the limit
  set} of $C$ -- denoted by $C_\infty$ -- is the intersection of (the closure of) $C$ with
the ideal boundary of $\ahn.$ In the Klein model,
\[
K(C_\infty) = K(C) \cap \partial B^n(0, 1).
\]
Note that in the Klein model we can
identify the ideal boundary of $\ahn$ with the unit tangent sphere at
the origion. With that identification, using the notation of Corollary
\ref{rgenmeas}, we can define 
\[
C_\infty = \bigcap_R \Omega_R(C). = \bigcap_d \Omega_d^K(C).
\]
In the sequel, we will assume that $C$ has non-empty interior, and
from now on, all computations will be in the Klein model. We then assume
particular, there is a ball $B_0$  of radius $r_0$ centered on the origin and
contained in $K(C).$ 

Assume now that $C$ has finite volume $V(C).$ Theorem \ref{doubleconecor1} allows us to strengthen Lemma \ref{Kmeas} as follows:
\begin{lemma}
\label{Kmeasgen}
\begin{eqnarray*}
\mu((\Omega_{d_1}^K(C))_{\alpha_{r_0}(d_1, d_2)})  &  \leq \\ \dfrac{(n-1) 2^{n-1}V(C)}{\omega_n}
\left(\dfrac{1-d_2}{1+d_2}\right)^{\frac{n-1}2} &  \leq\\ \dfrac{(n-1)2^{\frac{n-1}2} V(C) (1-d_2)^{\frac{n-1}2}}{\omega_n}.
\end{eqnarray*}
\end{lemma}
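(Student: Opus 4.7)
The statement is really just the composition of two facts already in hand: the Double Cone Lemma (packaged as Theorem~\ref{doubleconecor1}) which moves an angular neighborhood outward into a smaller-Euclidean-distance $\Omega$, and Lemma~\ref{Kmeas}, which bounds the measure of such an $\Omega$ directly. The plan is therefore to apply Theorem~\ref{doubleconecor1} with $r_0$ the radius of the ball $B_0$ centered at the origin and contained in $K(C)$, $r_2 = d_1$, $r_1 = d_2$, and then invoke Lemma~\ref{Kmeas} at the radius $d_2$.

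More concretely, the first step is to identify $\Omega_d^K(C)$ with the $\Omega_R$ of Theorem~\ref{doubleconecor1} at Euclidean radius $R = d$. This is immediate because $C$ is convex and contains the origin: any ray from the origin through a point of $C$ at Euclidean distance $> d$ must cross $\partial B^n(0,d)$ at a point that lies in $C$ by convexity, and conversely. With this identification, Theorem~\ref{doubleconecor1} yields the containment
\[
\bigl(\Omega_{d_1}^K(C)\bigr)_{\alpha_{r_0}(d_1,d_2)} \;\subseteq\; \Omega_{d_2}^K(C).
\]

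The second step is to apply monotonicity of the lower Minkowski content under set inclusion: if $A\subseteq B$, then $A_\varepsilon \subseteq B_\varepsilon$ for every $\varepsilon>0$, so $\lambda(A_\varepsilon)\le \lambda(B_\varepsilon)$, and taking $\liminf_{\varepsilon\to 0}$ gives $\mu(A)\le \mu(B)$. Applying this to the containment above,
\[
\mu\bigl((\Omega_{d_1}^K(C))_{\alpha_{r_0}(d_1,d_2)}\bigr) \;\le\; \mu\bigl(\Omega_{d_2}^K(C)\bigr).
\]
The third and final step is to quote Lemma~\ref{Kmeas} applied at $d=d_2$, which bounds the right-hand side by exactly the quantity stated in the lemma.

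There is essentially no obstacle: the only things to check are that the parameters are arranged so that $\alpha_{r_0}(d_1,d_2)$ is well defined (i.e.\ $d_1 > d_2 > r_0$, which is the range in which the lemma is asserted), and that we are working with the lower Minkowski content, for which monotonicity under inclusion is automatic. The second inequality in the displayed bound is then just the elementary estimate already used in Lemma~\ref{Kmeas}, carried over verbatim.
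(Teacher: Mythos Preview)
Your proposal is correct and matches the paper's approach exactly: the paper states Lemma~\ref{Kmeasgen} without a separate proof, merely saying that Theorem~\ref{doubleconecor1} ``allows us to strengthen Lemma~\ref{Kmeas},'' which is precisely the two-step argument (containment via the Double Cone Lemma, then the measure bound at $d_2$) that you spell out. Your added remarks about identifying $\Omega_d^K(C)$ with the $\Omega_r$ of Section~\ref{doubleconesec} and about monotonicity of $\mu$ under inclusion are the natural details the paper leaves implicit.
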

We are now ready to show:
\begin{theorem}
\label{hdimest}
Let $C$ be a convex set in $\ahn$ of finite volume with nonempty interior. Then, the (upper)
Minkowski dimension of $C_\infty$ is at most $(n-1)/2.$
\end{theorem}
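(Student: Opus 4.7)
The plan is to bound the Lebesgue measure of the $\epsilon$-neighborhood $(C_\infty)_\epsilon$ on the visual sphere $\mathbb{S}^{n-1}$ as $\epsilon \to 0$, and read off the Minkowski dimension bound directly from its definition. The key containment is $C_\infty \subset \Omega_{d_1}^K(C)$ for every $d_1 \in (r_0, 1)$, which gives $(C_\infty)_\epsilon \subset (\Omega_{d_1}^K(C))_\epsilon$.

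Given $\epsilon > 0$, I would pick $r_0 < d_2 < d_1 < 1$ with $\alpha_{r_0}(d_1, d_2) \geq \epsilon$, so that Lemma \ref{Kmeasgen} yields
$$\lambda\bigl((C_\infty)_\epsilon\bigr) \;\leq\; \mu\bigl((\Omega_{d_1}^K(C))_{\alpha_{r_0}(d_1, d_2)}\bigr) \;\leq\; \frac{(n-1)2^{(n-1)/2}V(C)}{\omega_n}(1-d_2)^{(n-1)/2}.$$
To make the right-hand side $O(\epsilon^{(n-1)/2})$, I need $1 - d_2 = O(\epsilon)$. Writing $d_i = 1 - \delta_i$ and Taylor expanding near the ideal boundary gives
$$\alpha_{r_0}(d_1, d_2) = \frac{r_0(\delta_2 - \delta_1)}{\sqrt{1-r_0^2}} + O(\delta_1^2 + \delta_2^2),$$
so taking, say, $\delta_1 = \epsilon^2$ and $\delta_2 = 2\epsilon\sqrt{1-r_0^2}/r_0$ ensures $\alpha_{r_0}(d_1, d_2) \geq \epsilon$ for all sufficiently small $\epsilon$, while $1 - d_2 = O(\epsilon)$.

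Combining the two estimates gives $\lambda((C_\infty)_\epsilon) \leq K \epsilon^{(n-1)/2}$ for a constant $K = K(n, V(C), r_0)$ and all small enough $\epsilon$. Plugging into the definition of the upper $s$-dimensional Minkowski content on $\mathbb{S}^{n-1}$,
$$\mathcal{M}^{*s}(C_\infty) = \limsup_{\epsilon \to 0}(2\epsilon)^{s-(n-1)}\lambda((C_\infty)_\epsilon) \;\leq\; 2^{s-(n-1)} K \limsup_{\epsilon\to 0}\epsilon^{s-(n-1)/2},$$
which vanishes for every $s > (n-1)/2$. By the definition of the upper Minkowski dimension, $\overline{\dim}_M(C_\infty) \leq (n-1)/2$.

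The only nontrivial input beyond the Double Cone Lemma and Lemma \ref{Kmeasgen} is the asymptotic analysis of $\alpha_{r_0}$ at the ideal boundary, which must deliver the linear scaling between $\alpha_{r_0}$ and $1 - d_2$ once $1 - d_1$ is made negligible. That computation is the crux of matching a given $\epsilon$ to the right $d_2$ in Lemma \ref{Kmeasgen}; everything else is bookkeeping.
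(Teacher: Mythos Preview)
Your argument is correct and follows essentially the same route as the paper: bound $\lambda((C_\infty)_\epsilon)$ by $(1-d_2)^{(n-1)/2}$ via Lemma~\ref{Kmeasgen}, and then match $\epsilon$ to $1-d_2$ linearly through the asymptotics of $\alpha_{r_0}$ near the boundary. The only cosmetic difference is that the paper sets $d_1=1$ outright (so that $\Omega_1^K(C)=C_\infty$) and invokes Lemma~\ref{alphabaseest} for the estimate $\alpha_{r_0}(1,d)\gtrsim r_0(1-d)$, whereas you keep $d_1=1-\epsilon^2<1$ and recover the same linear relation by a direct Taylor expansion of $\alpha_{r_0}$.
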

\begin{proof}
 Set $d_1=1$ in the statement of Lemma \ref{Kmeasgen}. By Lemma \ref{alphabaseest} we see that 
\[
\Omega_1^K(C)_{\asin (r_0 (1-d))}\subseteq \Omega_d^K(C)_{\asin r_0 \frac{d}{1-d}} \subseteq (\Omega_{d}^K(C))_{\alpha_{1, d}},
\]
and so (setting $\epsilon = \asin (r_0(1-d))$)
\[
\mu\left(\Omega_1^K(C)_\epsilon\right) \leq \dfrac{(n-1)2^{\frac{n-1}{2}}V(C)\sin^{\frac{n-1}2} \epsilon}{r_0^{\frac{n-1}{2}} \omega_n}.
\]
Letting $\epsilon$ tend to $0,$ we see that the measure of $\Omega_1^K(C)_\epsilon$ is bounded above by a constant times  $\epsilon^{\frac{n-1}2},$ whence the result.
\end{proof}
\begin{corollary}
\label{hdimestcor}
With $C$ as above, the Hausdorff dimension of $C_\infty$ is at most $(n-1)/2.$
\end{corollary}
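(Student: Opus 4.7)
The plan is to deduce this immediately from Theorem \ref{hdimest} together with the standard measure-theoretic inequality
\[
\dim_H(A) \;\leq\; \underline{\dim}_M(A) \;\leq\; \overline{\dim}_M(A),
\]
valid for every bounded set $A\subset\reals^n$ (and hence for $A\subset\essn$ via local charts). Since Theorem \ref{hdimest} supplies $\overline{\dim}_M(C_\infty)\leq (n-1)/2$, the conclusion follows the moment this inequality is in place.

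Thus the only content worth spelling out is why Hausdorff dimension is dominated by upper Minkowski dimension. My plan is to give the short direct argument: if $s>\overline{\dim}_M(C_\infty)$, then by definition of the upper Minkowski dimension $\mathcal{M}^{*s}(C_\infty)=0$, so for every $\delta>0$ and all sufficiently small $\epsilon>0$ one has $\lambda((C_\infty)_\epsilon)\leq \delta\,(2\epsilon)^{n-s}$. Covering $C_\infty$ by balls of radius $\epsilon$ from a maximal $\epsilon$-separated subset gives at most $N(\epsilon)\lesssim \epsilon^{-s}$ such balls, since each contributes volume $\gtrsim \epsilon^n$ to the tubular neighborhood. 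Those balls constitute an $\epsilon$-cover of $C_\infty$ whose $s$-dimensional Hausdorff sum is bounded uniformly in $\epsilon$, so $\mathcal{H}^s(C_\infty)<\infty$, whence $\dim_H(C_\infty)\leq s$. Letting $s\downarrow(n-1)/2$ yields the stated bound.

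There is essentially no obstacle here: the argument is a classical textbook inequality (see Mattila \cite{mattilabook}), and one could alternatively just cite it. The only mild point of care is that the sphere at infinity is not itself Euclidean, but since Minkowski and Hausdorff dimensions are both local bi-Lipschitz invariants and $\essn$ is locally bi-Lipschitz to $\reals^n$, the transfer is automatic.
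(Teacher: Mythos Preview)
Your approach is exactly the paper's: deduce the bound from Theorem \ref{hdimest} via the standard inequality $\dim_H\leq\overline{\dim}_M$. The paper's own proof is a one-line citation of this fact, without the packing/covering sketch you supply, so your proposal is correct and more detailed but not different in substance.
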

\begin{proof}
The Minkowski dimensions (upper and lower) are both upper bounds on the Hausdorff dimension.
\end{proof}
\subsection{Are the results on dimension sharp?}
As observed by Peter Storm, the bound of Theorem \ref{hdimest} is
clearly \emph{not} sharp in 
dimension $2.$ There, since the area of an ideal triangle is always
$\pi,$ it is easy to see that if $C$ has finite area, $C_\infty$ is a
\emph{finite} set, and hence any reasonable dimension of $C_\infty$
equals $0.$

On the other hand, Eq.~\eqref{milnoreq} indicates that the hyperbolic
volume element at $q$ is proportional to the Euclidean volume element
divided by $r^{(n+1)/2}.$ This indicates that the convex hull $C$ of a
(very) small totally geodesic disk $D_k$ (of dimension $k$) on
$\partial B^n(0, 1)$ and a ball $F$ in the interior of $B(0, 1)$ looks,
near the ideal boundary, as a Cartesian product of $D$ and the cone
from a point $p\in D$ onto a section $F^\perp$ of $F$ orthogonal to
$D.$ Since the dimension of $F^\perp$ equals $n-k,$ we see that we
have shown:
\begin{theorem}
\label{smoothvol}
Let $M$ be a piecewise smooth embedded submanifold of $\partial B^n(0,
1)$ of dimension $d,$ and let $C(M)$ be the convex hull of $M.$ Then
the hyperbolic volume of $C(M)$ is finite  if and only if $k$ is \emph{smaller}| than
$n-(n+1)/2 = (n-1)/2.$ 
\end{theorem}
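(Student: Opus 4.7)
The plan is to reduce the finiteness of $\vol_\mathbb{H}(C(M))$ to a one-dimensional radial integral, via the Klein-model volume form and a local wedge description of $C(M)$ near each smooth point of $M$.

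First, by Eq.~\eqref{milnoreq} the hyperbolic volume form on the Klein ball is
\[
dV_\mathbb{H} = (1-\|x\|^2)^{-(n+1)/2}\, dV_\mathbb{E}.
\]
Fix a smooth point $p_0 \in M$ and use coordinates $(r, \theta)$ with $r = \|x\|$ and $\theta \in \mathbb{S}^{n-1}$; parametrize $\theta$ near $p_0$ by $(u_1, u_2) \in \reals^k \times \reals^{n-1-k}$ with $u_1$ aligned to $T_{p_0}M$ and $u_2$ orthogonal to it inside $T_{p_0}\mathbb{S}^{n-1}$. Then, for $r$ near $1$, the volume form is comparable to $(1-r)^{-(n+1)/2}\, dr\, du_1\, du_2$. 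Locally $M$ is the graph $\{(1, u_1, g(u_1))\}$ with $g$ smooth and $dg(0)=0$.

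The main geometric step is to show that there exist constants $0 < c_- \leq c_+$, $t_0 > 0$, and a compact neighborhood $K_0$ of $p_0$ in $M$ such that
\[
\{\|u_2 - g(u_1)\| \leq c_-(1-r),\ u_1 \in K_0,\ 1-t_0 \leq r \leq 1\} \subseteq C(M) \subseteq \{\|u_2 - g(u_1)\| \leq c_+(1-r)\}.
\]
The lower containment comes from applying Lemma~\ref{doubleconelem} at each $u_1 \in K_0$, using that $C(M)$ contains an interior ball $B^n(0, r_0)$. The upper containment uses smoothness of $M$: each $p \in M$ lies on a supporting hyperplane of $C(M)$ (the sphere's tangent hyperplane at $p$), whose tangent directions include $T_p M$; the intersection of these supports across $K_0$ squeezes $C(M)$ into the wedge. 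Integrating the volume form over this wedge with $t = 1-r$ gives, up to bounded factors,
\[
|K_0| \int_0^{t_0} t^{-(n+1)/2}\, t^{n-1-k}\, dt = |K_0| \int_0^{t_0} t^{(n-3)/2 - k}\, dt,
\]
which converges iff $(n-3)/2 - k > -1$, i.e.\ $k < (n-1)/2$. A partition of unity on $M$ globalizes the estimate (the codimension-one singular strata of the piecewise smooth $M$ have effective dimension $<k$ and contribute strictly better convergence), and the portion of $C(M)$ bounded away from $\partial B^n(0,1)$ is compact in $\ahn$ and has finite volume.

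The principal obstacle is the upper wedge containment. The lower containment is an explicit cone construction from Lemma~\ref{doubleconelem}. The upper containment is where the smoothness hypothesis is essential, not merely convenient: it converts the tangency of $M$ to the sphere into a uniform family of supporting affine $(n-1)$-planes that pinch $C(M)$ to thickness $O(1-r)$ in each of the $n-1-k$ normal directions. That this argument truly depends on smoothness (not on topological dimension alone) is highlighted by Section~\ref{explower}, where non-smooth sets of Hausdorff dimension approaching $1$ in $\partial \mathbb{H}^3$ still bound finite-volume convex hulls, violating the wedge picture.
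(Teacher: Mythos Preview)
Your approach is the same as the paper's: both invoke the Klein-model volume form $dV_{\mathbb H}=(1-\|x\|^2)^{-(n+1)/2}dV_{\mathbb E}$ from Eq.~\eqref{milnoreq}, describe $C(M)$ near a smooth boundary point as a product of a $k$-disk along $M$ with an $(n-k)$-dimensional cone in the transverse directions, and read off the convergence exponent $\tfrac{n-3}{2}-k>-1$. The paper's argument is a two-sentence heuristic; you have filled in the computation and correctly invoked Lemma~\ref{doubleconelem} for the inner wedge inclusion.

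There is, however, a genuine gap in your upper containment. The tangent hyperplane to the sphere at $p\in M$ is indeed a supporting hyperplane of $C(M)$, but the only constraint it imposes is $\langle x,p\rangle\le 1$, which near $p_0$ amounts to $r\le 1$ and gives no control whatsoever in the $u_2$ directions; intersecting these over $K_0$ does not produce the wedge $\|u_2-g(u_1)\|\le c_+(1-r)$. What you actually need is that the \emph{normal cone} of $C(M)$ at $p_0$ contains not just the radial direction $p_0$ but an open neighbourhood of it inside the $(n-k)$-dimensional normal space $\operatorname{span}(p_0)\oplus (T_{p_0}M)^{\perp}\cap T_{p_0}\mathbb S^{n-1}$. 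Concretely, for each unit $v$ in the $u_2$-space and small $\epsilon>0$, the tilted normal $\nu=p_0+\epsilon v$ supports $C(M)$ at $p_0$ iff $\epsilon\langle m,v\rangle\le \tfrac12\|m-p_0\|^2$ for all $m\in M$; for $m$ near $p_0$ this follows from the $C^2$ estimate $\langle m,v\rangle=O(\|m-p_0\|^2)$, and for $m$ bounded away from $p_0$ it follows from compactness. The resulting family of supports then yields $|\langle x,v\rangle|\le \epsilon_0^{-1}(1-\langle x,p_0\rangle)\le C(1-r)$, which is exactly your upper wedge. With this correction your integral computation stands, and the smoothness hypothesis is used precisely where you said it would be.
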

\begin{remark}
The regularity required in the statement of Theorem \ref{smoothvol} is
not very onerous: $C^1$ is certainly sufficient; presumably
rectifiable is also. 
\end{remark}
Theorem \ref{smoothvol} indicates that the bound of Theorem
\ref{hdimest} is \emph{sharp}  for piecewise smooth sets and when $n$ is even. For arbitrary sets, we show a lower bound (at least when $n$ is $3$) in Section \ref{explower}.
\begin{remark}
By the results of B. Colbois and P. Verovic \cite{colboishilbert}, the results of this note
apply essentially without change to convex bodies in the Hilbert
metric on arbitrary smooth convex domains.
\end{remark}
\section{Applications to central sections}
\label{centralsec}
In this section we will apply the above results to the following question: 
\begin{quotation}
Suppose we have a convex set $C$ with nonempty interior and finite volume $V(C)$  in $X,$ and a point $p \in C.$ For each $k$-dimensional plane $\Pi$ through $p,$ consider $\Pi_C = \Pi \cap C,$ and  let $d(\Pi) = \max_{x\in \Pi_C}(d(x, p)$ -- in other words, $d(\Pi)$ (not necessarily finite) is the radius of the smallest sphere containing $\Pi_C.$ The question, then, is do we have any \emph{upper} bound on the \emph{smallest} $d(\Pi)?$
\end{quotation}
In this form, the question is not hard to answer using our results above.
First, we will need the following standard fact (see, eg, \cite[page 4]{milschechtbook}):
\begin{theorem}
\label{grassfubini}
Let $1\leq k \leq n,$ let $\zeta \in G_{n, k},$ where $G_{n, k}$ is the Grassmannian of $k$ planes through the origin in $\mathbb{R}^n.$ Denote by  $S(\zeta) = \mathbb{S}^{n-1}\cap \zeta$ the unit sphere of $\zeta.$
Then
\[
\int_{\mathbb{S}^{n-1}} f d\nu = \int_{G_{n, k}}\int_{S(\zeta)} f(t) d\nu_\zeta(t)d\nu(\zeta)
\]
for all $f\in L^1(\sphere^{n-1}),$ where $\nu_\zeta$ is the normalized Haar measure on $S(\zeta),$ $\nu$ on the left is the normalized Haar measure on $\sphere^{n-1}$ and on the right on $G_{n, k}.$
\end{theorem}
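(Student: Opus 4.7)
The plan is to prove the identity by invoking the uniqueness of the rotation-invariant probability measure on $\mathbb{S}^{n-1}$. Define, for $f \in C(\mathbb{S}^{n-1})$, the functional
\[
\Lambda(f) = \int_{G_{n,k}} \int_{S(\zeta)} f(t)\, d\nu_\zeta(t)\, d\nu(\zeta).
\]
Clearly $\Lambda$ is linear and positive, and taking $f \equiv 1$ gives $\Lambda(1) = 1$, so $\Lambda$ corresponds (by Riesz representation) to a Borel probability measure $\mu_\Lambda$ on $\mathbb{S}^{n-1}$.

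The heart of the argument is to show that $\mu_\Lambda$ is invariant under the action of $O(n)$. Given $R \in O(n)$, observe that $R \cdot S(\zeta) = S(R\zeta)$, and that the pushforward of the normalized Haar measure $\nu_\zeta$ on $S(\zeta)$ under $R$ is exactly $\nu_{R\zeta}$ (both are uniform probability measures on $(k-1)$-spheres, and $R$ restricts to an isometry between them). Therefore, writing $f_R(x) = f(Rx)$, I would compute
\[
\Lambda(f_R) = \int_{G_{n,k}} \int_{S(\zeta)} f(Rt)\, d\nu_\zeta(t)\, d\nu(\zeta) = \int_{G_{n,k}} \int_{S(R\zeta)} f(s)\, d\nu_{R\zeta}(s)\, d\nu(\zeta),
\]
and then use the $O(n)$-invariance of $\nu$ on $G_{n,k}$ to change variables $\zeta \mapsto R\zeta$, obtaining $\Lambda(f_R) = \Lambda(f)$. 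Since the uniform measure $\nu$ is the unique $O(n)$-invariant Borel probability measure on $\mathbb{S}^{n-1}$, we conclude $\mu_\Lambda = \nu$, which is exactly the desired identity for continuous $f$.

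To upgrade from continuous $f$ to $f \in L^1(\mathbb{S}^{n-1})$, I would use the standard density of $C(\mathbb{S}^{n-1})$ in $L^1(\mathbb{S}^{n-1}, \nu)$. For any sequence $f_m \to f$ in $L^1$, the outer identity $\int f_m\, d\nu \to \int f\, d\nu$ is automatic, and Fubini applied to a suitable subsequence (or an appeal to the monotone class theorem, starting from indicators of open sets) upgrades the right-hand side as well. A minor technical point to verify en route is that $\zeta \mapsto \int_{S(\zeta)} f\, d\nu_\zeta$ is $\nu$-measurable; this is clear for continuous $f$ by continuity of the slice integral, and extends to $L^1$ by monotone/dominated convergence on a dense subclass.

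The main obstacle is really just bookkeeping the invariance argument, in particular establishing that $R_*\nu_\zeta = \nu_{R\zeta}$ and that the change-of-variables on $G_{n,k}$ preserves $\nu$; both are consequences of the defining uniqueness of Haar measure on the relevant homogeneous spaces $O(n)/(O(k) \times O(n-k))$ and $O(n)/O(n-1)$, respectively. Once invariance is in hand, uniqueness of Haar measure finishes the proof with no further computation.
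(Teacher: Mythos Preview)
Your argument is correct and is in fact the standard proof of this identity: the right-hand side defines an $O(n)$-invariant probability measure on $\mathbb{S}^{n-1}$, and uniqueness of Haar measure forces it to coincide with $\nu$. The invariance steps you isolate (that $R_*\nu_\zeta = \nu_{R\zeta}$ and that $\nu$ on $G_{n,k}$ is $O(n)$-invariant) are exactly the right ones, and the extension from $C(\mathbb{S}^{n-1})$ to $L^1$ is routine.

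There is nothing to compare against, however: the paper does not prove this theorem at all. It is quoted as a ``standard fact'' with a reference to the Milman--Schechtman book, and is used only as a tool in the subsequent argument on central sections. So your proof is not an alternative to the paper's proof but rather a supplied proof where the paper gives none.
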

We will be applying Theorem \ref{grassfubini} to the indicator function $f_\epsilon(M)$ of the $\epsilon$-neighborhood $M_\epsilon$ of a set $M\subseteq \sphere^{n-1}.$ Every $k$-plane which intersects $M$ intersects $M_\epsilon$ in at least an $\epsilon$-ball, and therefore if \emph{every} $k$-plane intersects $K,$ we have the inequality:
\begin{equation}
\label{epsilonineq}
\mu(M_\epsilon)\geq \sphvol{k}(\epsilon),
\end{equation}
which implies:
\begin{theorem}
\label{basicthm}
There is a $k$-plane $\Pi_k$ through the origin such that \[(\Pi_k \cap C) \subseteq B^n\left(0, \frac12\log \frac{1+d}{1-d}\right)\] if 
\[
\dfrac{\mu((\Omega_d^K)_\epsilon)}{\sphvol{{k-1}}(\epsilon) } < 1
\]
for \emph{some} $\epsilon > 0.$ 
\end{theorem}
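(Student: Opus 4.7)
The plan is to prove the contrapositive: if every $k$-plane $\Pi$ through the origin contains a point of $C$ outside $B^n(0,\frac{1}{2}\log\frac{1+d}{1-d})$, then $\mu((\Omega_d^K)_\epsilon)\geq \sphvol{k-1}(\epsilon)$ for every $\epsilon > 0$. The hypothesis of the theorem will then force a contradiction and produce the desired plane.

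First I would translate the geometric condition into a statement on the visual sphere via the Klein model. By formula (\ref{distK}), the hyperbolic ball $B^n(0,\frac{1}{2}\log\frac{1+d}{1-d})$ corresponds exactly to the Euclidean Klein ball of radius $d$. So if the conclusion fails for every $\Pi$, then for each $k$-plane $\Pi$ we can choose a point $q\in \Pi\cap C$ with $\|K(q)\|>d$; the unit direction of $K(q)$ seen from the origin lies in $S(\Pi)\cap \Omega_d^K(C)$, where $S(\Pi)=\Pi\cap\sphere^{n-1}$.

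Next I would pass from a single direction in $S(\Pi)\cap\Omega_d^K(C)$ to a full spherical $\epsilon$-ball. Since the inclusion $S(\Pi)\hookrightarrow \sphere^{n-1}$ is an isometry onto its image, the spherical $\epsilon$-ball about any point of $S(\Pi)\cap\Omega_d^K(C)$ (measured within $S(\Pi)$) is contained in $S(\Pi)\cap (\Omega_d^K)_\epsilon$. As $S(\Pi)\cong\sphere^{k-1}$, this gives the lower bound
\[
\mu_\zeta\!\bigl(S(\Pi)\cap(\Omega_d^K)_\epsilon\bigr)\geq \sphvol{k-1}(\epsilon),
\]
uniformly in $\Pi\in G_{n,k}$. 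Finally, I would apply Theorem \ref{grassfubini} to the indicator function of $(\Omega_d^K)_\epsilon$: integrating the pointwise bound over $G_{n,k}$ yields $\mu((\Omega_d^K)_\epsilon)\geq \sphvol{k-1}(\epsilon)$, contradicting the hypothesis that this ratio is $<1$ for some $\epsilon$. Hence some $k$-plane must satisfy the stated inclusion.

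The argument is short; the only place it could stumble is bookkeeping with the normalization conventions used in the paper, since the Grassmannian formula is stated in terms of the probability measures $\nu$ and $\nu_\zeta$ while the conclusion is phrased in terms of the unnormalized $\mu$. I would therefore be careful to distinguish the two when invoking (\ref{grassfubini}), rescaling as needed so that the lower bound produced on the Grassmannian side is exactly $\sphvol{k-1}(\epsilon)$ rather than a constant multiple of it, matching the denominator in the stated hypothesis.
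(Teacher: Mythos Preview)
Your argument is correct and follows exactly the paper's route: the paper derives the theorem from Theorem~\ref{grassfubini} applied to the indicator of $(\Omega_d^K)_\epsilon$, using the observation that if every $k$-plane meets $\Omega_d^K$ then each $S(\zeta)\cap(\Omega_d^K)_\epsilon$ contains an $\epsilon$-ball, which is precisely your contrapositive computation. Your closing caution about the $\mu$ versus $\nu$ normalization is well taken---the paper is informal on this point---but the structure of the proof is identical.
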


Now, let $M = \Omega_d^K(C).$ where $C$ satisfies the hypotheses of the beginning of this note (in particular, contains a ball of radius $r_0$ around the origin). By Lemma \ref{Kmeasgen}, for any $d_2 < d,$ we have
\[
\nu((\Omega_d^K)_{\alpha_{r_0}(d, d_2)}) \geq \dfrac{(n-1)2^{\frac{n-1}2}V(C)(1-d_2)^{\frac{n-1}2}}{\omega_n^2}.
\]
We assume in the sequel that $V(C)$ is large, and that $d, d_2$ are close to $1.$ Under those assumptions, by Lemma \ref{alphabaseest} we have
\[
( \Omega_d^K)_{r_0(d_2 - d)} \subset (\Omega_d^K)_{\alpha_{r_0}(d, d_2)} 
\]
Setting $\epsilon = 1 - d$ and $\epsilon_2 = 1 - d_2,$ we also have (for any $k>0$), 
\[
V_k(r_0(\epsilon_2-\epsilon)) \sim \kappa_{k-1} r_0^{k-1} (\epsilon_2 - \epsilon)^{k-1}.
\]
Thus, if $\Omega_d^K$ intersects every plane, by Eq.~\eqref{epsilonineq},
\[
(n-1)2^{\frac{n-1}2}V(C)\epsilon_2^{\frac{n-1}2}/\omega_n^2 \geq \kappa_{k-1} r_0^{k-1}(\epsilon_2 - \epsilon)^{k-1}/\omega_k
\]
for \emph{every} $\epsilon_2>\epsilon.$
Writing $\epsilon_2 = \epsilon(1+x),$
we get 
\[
(n-1)2^{\frac{n-1}2}V(C)(1+x)^{\frac{n-1}2}\epsilon^{\frac{n-1}2}/\omega_n^2 \geq \kappa_{k-1} r_0^{k-1}x^{k-1}\epsilon^{k-1}/\omega_k,
\]
or
\begin{equation}
\label{preest}
\epsilon^{\frac{n-1}2 - (k-1)} \geq\dfrac{\kappa_{k-1}r_0^{k-1}\omega_n^2}{(n-1)2^{\kappa_{k-1}\frac{n-1}2}V(C)}\dfrac{x^{k-1}}{(1+x)^{\frac{n-1}2}},
\end{equation}
for all $x>0.$

Applying Lemma \ref{maxest} to the estimate \eqref{preest}, with $m=\dfrac{n-1}{2},l = {k-1},$ we see that in order for $\Omega_{1-\epsilon}^k$ to intersect every plane through the origin,
we must have
\begin{equation}
\label{postest}
\epsilon\geq \frac{n+1-2k}{2}\left(\dfrac{\kappa_{k-1}r_0^{k-1}\omega_n^2}{\omega_k V(C)}\dfrac{(k-1)^{k-1}}{(n-1)^{(n+1)/2}}\right)^{2/(n+1-2k)}.
\end{equation}
If we assume in addition that $k\ll n,$ the estimate \eqref{postest} simplifies further to:
\begin{equation}
\label{postest2}
\epsilon > \frac{1}{2}\left(\dfrac{r_0^{k-1}\omega_n^2}{V(C)}\right)^{2/(n+1-2k)},
\end{equation}
which simplifies further using Stirling's formula to:
\begin{equation}
\label{postest3}
\epsilon > \frac{2\pi^2 e^2}{ n^2}\left(\dfrac{r_0^{k-1}}{V(C)}\right)^{2/(n+1-2k)}.
\end{equation}
\subsection{Hyperbolic Space}
The corresponding hyperbolic radius is given by 
\[
r = \frac12\log\left(\frac{2-\epsilon}{\epsilon}\right) \sim \frac12 (\log 2 - \log \epsilon) \sim \frac12 \log 2 - \frac12 \log \epsilon,
\]
so we have
\begin{theorem}
\label{hypradversion}
Let $C \in \ahn$ be a convex set of \emph{large} volume $V(C)$ which contains a ball radius $r_0 \ll 1$ around a point $p \in C.$ Then if $k \leq (n-1)/2$ there exists a $k$-dimensional plane $\Pi_k$ through $p,$ such that $C\cap \Pi_k$ is contained in $B(p, r),$ as long as
\begin{multline}
r= \log 2 - \frac12 \log (n+1-2k)  - \frac{1}{n+1-2k} \times \\ \left((k-1)\log (k-1) r_0  + \log(\kappa_{k-1}\omega_n^2/\omega_k)   -  \log V(C) - \frac{n+1}2 \log (n-1)\right).
\end{multline}
For $n \gg k,$ there is the  asymptotic version:
\[
r =  \frac{\log V(C) - (k-1) \log r_0}{n+1-2k} + \log n - \log \pi - 1.
\]
\end{theorem}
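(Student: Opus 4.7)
The plan is to translate the Euclidean (Klein-model) estimate \eqref{postest} into hyperbolic distance using the distance formula \eqref{distK}. Recall that \eqref{postest} was derived as follows: if $\Omega_{1-\epsilon}^K(C)$ intersects every $k$-plane through the origin, then by Theorem \ref{basicthm}, applied with the measure bound of Lemma \ref{Kmeasgen} and the volume estimate $V_{k-1}(r_0(\epsilon_2 - \epsilon)) \sim \kappa_{k-1} r_0^{k-1}(\epsilon_2-\epsilon)^{k-1}$, inequality \eqref{postest} must hold. Contrapositively, as soon as we choose $\epsilon$ strictly smaller than the right-hand side of \eqref{postest}, there must be a $k$-plane $\Pi_k$ through $p$ (the Klein origin) on which no direction belongs to $\Omega_{1-\epsilon}^K(C)$; that is, $C\cap \Pi_k$ is contained in the Klein ball $B^n(0, 1-\epsilon)$.

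Next, I would apply \eqref{distK}: the Klein ball $B^n(0, 1-\epsilon)$ centered at $p$ equals the hyperbolic ball of radius $r = \tfrac12 \log\tfrac{2-\epsilon}{\epsilon}$. Since we are operating in the regime where $V(C)$ is large and $r_0 \ll 1$, the exponent $2/(n+1-2k)$ forces $\epsilon$ to be small, so $r \sim \tfrac12 \log 2 - \tfrac12 \log \epsilon$. Substituting the explicit lower bound from \eqref{postest} and expanding the logarithm yields
\[
r = \tfrac12 \log 2 - \tfrac12 \log\tfrac{n+1-2k}{2} - \tfrac{1}{n+1-2k}\log\!\left(\tfrac{\kappa_{k-1}r_0^{k-1}\omega_n^2}{\omega_k V(C)}\cdot\tfrac{(k-1)^{k-1}}{(n-1)^{(n+1)/2}}\right),
\]
which, after collecting terms, is exactly the displayed expression in the theorem. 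For the asymptotic version $n \gg k$, I would instead start from \eqref{postest3}, take its logarithm, and use Stirling's formula on $\omega_n$ inside the constant. Since $n+1-2k \sim n$, the dominant contribution from $\epsilon^{1/(n+1-2k)}$ becomes $\log n - \log \pi - 1$ plus the $V(C)$ and $r_0$ terms divided by $n+1-2k$, giving the claimed asymptotic.

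The computation is essentially bookkeeping rather than conceptual: all geometric work has already been done in Lemma \ref{Kmeasgen}, Theorem \ref{basicthm}, and in the derivation of \eqref{postest}. The main nuisance is tracking constants cleanly through several nested logarithms, and verifying that the regime of validity is self-consistent, namely that the value of $\epsilon$ produced is indeed $\ll 1$ (so that the linearization $r \sim \tfrac12\log 2 - \tfrac12\log\epsilon$ is legitimate) and that the substitution $(\Omega_d^K)_{r_0(d_2-d)} \subset (\Omega_d^K)_{\alpha_{r_0}(d,d_2)}$ from Lemma \ref{alphabaseest} used in deriving \eqref{preest} applies. Both hold because $V(C)$ is taken large and $k \leq (n-1)/2$ ensures the exponent $2/(n+1-2k)$ stays positive and finite, so no further geometric input is needed beyond the material already developed.
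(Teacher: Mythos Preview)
Your proposal is correct and follows exactly the paper's approach: the paper likewise converts the Klein-model estimate \eqref{postest} into hyperbolic radius via $r = \tfrac12\log\tfrac{2-\epsilon}{\epsilon} \sim \tfrac12\log 2 - \tfrac12\log\epsilon$, expands the logarithm to obtain the displayed formula, and then passes to \eqref{postest3} with Stirling's formula for the asymptotic version. Your remarks on the regime of validity are a welcome addition but not something the paper spells out.
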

\begin{remark}
A comparison of Theorem \ref{hypradversion} and the estimate \eqref{stirlingball} indicates that we ``lose'' roughly $\frac12 \log n$ for the diameter of sections of arbitrary convex bodies of volume $V$ versus a ball of the same volume.
\end{remark}
\begin{example}
A non-asymptotic example is when $n=3, k=1.$ Then we get the estimate
\begin{multline}
r = \log 2 - \frac12 \log2 -\frac12\left(2 \log 4\pi - \log 2 - \log V(C) - 2\log 2\right) =\\
\frac12\log V(C)-\log2\pi,
\end{multline}
valid for large $V(C).$
\end{example}

\section{Convex sets in $\mathbb{E}^n.$}
\label{euclid}
Here, we use the techniques developed above to analyze what we can show about convex sets in $\mathbb{E}^n.$ Related work can be found in \cite{klartagmest} and references therein; Klartag's results are asymptotically sharper, but since our methods seem completely different and more geometric, and the estimates we obtain are quite concrete, the current section seems to be of interest.  Let $C$ be such a convex set, and, as before, we assume that $C$ has positive volume (hence nonempty interior). For simplicity, set $p=\mathbb{0}.$ Assume that $B(\mathbb{0}, r_0) \subset C.$ It is clear that the diameter of $C$ is bounded, since the volume of a right cone in $\mathbb{E}^n$ grows linearly with the altitude, so the questions about the dimension of $C_\infty$ do not come up. However, the questions of diameter of planar sections as in Section \ref{centralsec} are interesting (especially as they are connected to the extensive work on the Busemann-Petty problem, as in \cite{koldobskybp,zhangpos,gardnerbp,grin}, and it is not difficult to extend our methods to this setting.
\subsection{Volume estimates}
\label{eucvolest}
By  the standard formulae for Euclidean spheres and balls in Eq. \eqref{eucvols} together with Corollary \ref{rgenmeas} we get the following estimate on the 
 the visual measure of the set $\Omega_r(C)$  of directions where the ray of radius $r$ from the origin is contained in $C:$
\begin{equation}
\label{eucmeasure}
\mu(\Omega_r(C)) \leq \dfrac{V(C)}{\kappa_n r^n},
\end{equation}
where, as before, $V(C)$ denotes the volume of $C.$
\subsection{Double cone lemma estimates}
The proof and statement of the Double Cone Lemma \ref{doubleconelem} go through without change; we state the result for convenience here:
\begin{lemma}
\label{doublecone2}
Let $r_1 > r_2 >r_0 0,$ and let $\alpha_{r_0}(r_1, r_2)$ be as in Definition \ref{alphadef}.
Then the $\alpha_{r_0}(r_1, r_2)$ neighborhood on $\Omega_{r_1}(C)$ is contained in $\Omega_{r_2}(C).$
\end{lemma}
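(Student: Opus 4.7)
The plan is to observe that the statement is the Euclidean analog of Theorem \ref{doubleconecor1}, and that the proof of that theorem only used Lemma \ref{doubleconelem}, which itself was established by purely Euclidean reasoning (the Pythagorean theorem and a similar-triangles computation). So the entire argument carries over verbatim; I just need to verify that every ingredient is available in the Euclidean setting.

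First I would fix a direction $\xi\in\Omega_{r_1}(C)$, meaning that the segment from the origin to $r_1\xi$ lies in $C$. Since $B^n(0,r_0)\subset C$ by hypothesis and $C$ is convex, the closed convex hull $H(\xi,r_0)$ of $B^n(0,r_0)$ together with the point $r_1\xi$ is also contained in $C$. This is the same ``ice-cream-cone'' construction as in the hyperbolic case, and since Euclidean balls in the Klein model are realized by honest Euclidean balls, nothing changes at this step.

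Next I would apply Lemma \ref{doubleconelem} with the roles of $(r_1,r_2)$ unchanged: the cone $H(\xi,r_0)$ meets $\partial B^n(0,r_2)$ in a spherical disk of angular radius exactly $\alpha_{r_0}(r_1,r_2)$ centered at the direction $\xi$. Thus for every unit vector $\eta$ with angular distance at most $\alpha_{r_0}(r_1,r_2)$ from $\xi$, the point $r_2\eta$ lies in $H(\xi,r_0)\subset C$, and by convexity so does the entire segment from $0$ to $r_2\eta$. This means $\eta\in\Omega_{r_2}(C)$, proving
\[
\bigl(\Omega_{r_1}(C)\bigr)_{\alpha_{r_0}(r_1,r_2)}\subseteq \Omega_{r_2}(C).
\]

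There is no real obstacle: the only substantive geometric content is Lemma \ref{doubleconelem}, and that lemma is already Euclidean in nature, so the reduction to it is essentially bookkeeping. The one thing worth checking is simply that $r_1>r_2>r_0$ ensures $\alpha_{r_0}(r_1,r_2)$ is well defined and positive (the argument of $\asin$ lies in $(0,1]$), which is immediate from Definition \ref{alphadef}.
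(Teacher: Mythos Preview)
Your proof is correct and is exactly the approach the paper takes: the paper simply remarks that ``the proof and statement of the Double Cone Lemma \ref{doubleconelem} go through without change'' and states the result, so your write-up is actually more detailed than the original. One small caveat: your parenthetical claim that $r_1>r_2>r_0$ forces the argument of $\asin$ into $(0,1]$ is not true in general (take $r_1\gg r_2$), but this does not damage the argument---when the quantity exceeds $1$ the cone $H(\xi,r_0)$ already contains all of $B^n(0,r_2)$, so the containment $(\Omega_{r_1})_{\alpha}\subseteq\Omega_{r_2}$ holds trivially.
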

\subsection{Applications to finding round sections}
As before, our basic tool is:
\begin{theorem}
\label{basicthmeuc}
There is a $k$-plane $\Pi_k$ through the origin such that \[(\Pi_k \cap C) \subseteq B(0, r)\] if 
\[
\dfrac{\mu((\Omega_r)_\epsilon)}{V^{k-1}(\epsilon) } < 1
\]
for \emph{some} $\epsilon > 0.$ Above, $V^{k-1}(\epsilon)$ denotes the normalized volume of the spherical ball of radius $\epsilon.$
\end{theorem}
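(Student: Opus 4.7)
The plan is to prove the contrapositive by the same integral-geometric argument that was used to establish Theorem~\ref{basicthm}; none of that derivation used the hyperbolic metric, and the present statement is the Euclidean shadow of the same reasoning. The only substitution is to replace the hyperbolic ball of radius $\tfrac12\log\tfrac{1+d}{1-d}$ by the Euclidean ball $B(0,r)$.

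Concretely, I would suppose that every $k$-plane $\Pi_k$ through the origin satisfies $\Pi_k\cap C \not\subseteq B(0,r)$, and for each such $\Pi_k$ pick a witness $q=q_{\Pi_k}\in (\Pi_k\cap C)\setminus B(0,r)$. Since $0\in C$ and $C$ is convex, the whole segment $[0,q]$ lies in $C$, so its unit direction $q/\|q\|$ belongs to $\Omega_r(C)\cap S(\Pi_k)$, where $S(\Pi_k)=\Pi_k\cap\mathbb{S}^{n-1}$. Hence every great $(k-1)$-sphere $S(\Pi_k)$ meets $\Omega_r(C)$. Fix $\epsilon>0$. Because $S(\Pi_k)$ is totally geodesic in $\mathbb{S}^{n-1}$, the ambient spherical $\epsilon$-ball around any point of $S(\Pi_k)$ cuts $S(\Pi_k)$ in an intrinsic spherical cap of radius $\epsilon$; applied at a point of $S(\Pi_k)\cap\Omega_r(C)$ this forces
\[
\nu_{\Pi_k}\bigl(S(\Pi_k)\cap (\Omega_r(C))_\epsilon\bigr)\ \geq\ V^{k-1}(\epsilon)
\]
uniformly in $\Pi_k\in G_{n,k}$. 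Feeding the indicator function of $(\Omega_r(C))_\epsilon$ into Theorem~\ref{grassfubini} and integrating the last inequality over the Grassmannian then yields
\[
\nu\bigl((\Omega_r(C))_\epsilon\bigr)\ \geq\ V^{k-1}(\epsilon),
\]
which, under the normalisation convention already in force in Eq.~\eqref{epsilonineq}, is exactly the inequality $\mu((\Omega_r)_\epsilon)/V^{k-1}(\epsilon)\geq 1$. Its contrapositive is the theorem.

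The argument contains no genuine obstacle; the only subtle point is the measurability caveat already present in the hyperbolic version, namely that $\Omega_r(C)$ itself is not known to be Borel, which is why the hypothesis must be phrased in terms of the lower Minkowski content $\mu$ of the inflated Borel set $(\Omega_r)_\epsilon$ rather than a Lebesgue measure of $\Omega_r$ itself. The Euclidean volume computation of Eq.~\eqref{eucmeasure} and the Euclidean Double Cone Lemma~\ref{doublecone2} do not enter at this stage — they will only be needed to make the hypothesis of Theorem~\ref{basicthmeuc} quantitative in the subsequent subsections, exactly as was done hyperbolically in Section~\ref{centralsec}.
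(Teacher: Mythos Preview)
Your proof is correct and follows exactly the paper's approach: the paper introduces Theorem~\ref{basicthmeuc} with the phrase ``As before, our basic tool is,'' indicating that it is proved by the contrapositive Grassmann--Fubini argument of Theorem~\ref{grassfubini} and Eq.~\eqref{epsilonineq}, which is precisely what you have written out in full. Your additional remarks on measurability and on the roles of Eq.~\eqref{eucmeasure} and Lemma~\ref{doublecone2} are accurate and match how the paper subsequently uses the theorem.
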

Using Eq. \eqref{eucmeasure}, Lemma \ref{doublecone2}, and Theorem \ref{basicthmeuc}, we get:
\begin{corollary}
\label{basiccoreuc}
There is a $k$-plane $\Pi_k$ through the origin, such that \[(\Pi_k \cap C) \subseteq B(0, r),\]
if 
\[
\dfrac{V(C)}{\omega_n \kappa_n r_1^n V^{k-1}(\alpha_{r_0}(r, r_1))} < 1
\]
for some $0< r_1 < r.$
\end{corollary}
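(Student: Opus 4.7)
The proof is a straightforward chaining of the three ingredients just established: the Euclidean basic theorem (Theorem \ref{basicthmeuc}), the double cone lemma (Lemma \ref{doublecone2}), and the visual-measure bound \eqref{eucmeasure}. No new geometric input is required; the content is only in choosing the free parameter so that the angular enlargement produced by the double cone lemma matches the $\epsilon$ appearing in the basic theorem.

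The plan is as follows. Given a free auxiliary radius $r_1 \in (0, r)$, set $\epsilon := \alpha_{r_0}(r, r_1)$. By Lemma \ref{doublecone2}, the $\epsilon$-neighborhood on the visual sphere of $\Omega_r(C)$ is contained in $\Omega_{r_1}(C)$: a direction within angular distance $\alpha_{r_0}(r, r_1)$ of a direction whose radius-$r$ ray lies in $C$ must itself have its initial radius-$r_1$ segment inside $C$. Monotonicity of $\mu$ followed by the visual-measure bound \eqref{eucmeasure} (applied at radius $r_1$) then gives
\[
\mu\bigl((\Omega_r(C))_\epsilon\bigr) \;\leq\; \mu(\Omega_{r_1}(C)) \;\leq\; \frac{V(C)}{\kappa_n r_1^n}.
\]

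Converting to the normalized measure $\nu$ used in Theorem \ref{basicthmeuc} amounts to dividing by $\omega_n$, so the hypothesis of that theorem is satisfied as soon as
\[
\frac{V(C)}{\omega_n \kappa_n r_1^n \, V^{k-1}(\alpha_{r_0}(r, r_1))} < 1,
\]
which is exactly the stated sufficient condition. I do not expect any serious obstacle here; the only bookkeeping point is that monotonicity of $\mu$ under set inclusion is needed even though $(\Omega_r(C))_\epsilon$ need not be Lebesgue measurable, and this is immediate from the definition of $\mu$ as a $\liminf$ of Lebesgue measures of outer $\delta$-thickenings. The rest is just substituting $\epsilon = \alpha_{r_0}(r, r_1)$ into the hypothesis of Theorem \ref{basicthmeuc} and rearranging.
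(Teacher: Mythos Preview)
Your proposal is correct and follows exactly the paper's approach: the paper's proof is the single line ``Using Eq.~\eqref{eucmeasure}, Lemma~\ref{doublecone2}, and Theorem~\ref{basicthmeuc}, we get:'', and you have simply written out the chaining explicitly, choosing $\epsilon=\alpha_{r_0}(r,r_1)$ so that Lemma~\ref{doublecone2} gives $(\Omega_r)_\epsilon\subseteq\Omega_{r_1}$, then applying \eqref{eucmeasure} at radius $r_1$ and normalizing by $\omega_n$ before invoking Theorem~\ref{basicthmeuc}.
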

Combining Lemmas \ref{sphvollem},\ref{alphaaux},\ref{alphabaseest}, and \ref{maxest} we see:
\begin{lemma}
Assuming, as before, that $r_1 = r/(1+x),$ for some $x > 0,$
\label{corest}
\begin{multline}
\min_{x >0} \dfrac{V(C)}{\omega_n \kappa_n r_1^n V^{k-1}(\alpha_{r_0}(r, r_1))} < \min_{x>0}\dfrac{V(C)(1+x)^n\omega_{k}}{\kappa_{k-1}\omega_n \kappa_n r^n r_0^{k-1}x^{k-1}} \\=\dfrac{V(C) \omega_k}{\kappa_{k-1}\omega_n \kappa_n r^n r_0^{k-1}} \dfrac{n^n}{(k-1)^{k-1}(n-k+1)^{n-k+1}}.
\end{multline}
\end{lemma}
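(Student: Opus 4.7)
The plan is to bound the expression pointwise in $x$ by the simpler right-hand quantity, and then optimize that quantity explicitly in closed form.

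First I would expand $1/r_1^n = (1+x)^n/r^n$ using the relation $r_1 = r/(1+x)$, so the left-hand expression becomes
\[
\frac{V(C)(1+x)^n}{\omega_n \kappa_n r^n V^{k-1}(\alpha_{r_0}(r, r_1))}.
\]
The only remaining task for the first inequality is a lower bound on $V^{k-1}(\alpha_{r_0}(r, r_1))$. Applying Lemma \ref{alphabaseest} gives a bound of the form $\sin \alpha_{r_0}(r, r_1) \gtrsim r_0 x$ under the substitution $r_1 = r/(1+x)$ (since $(r - r_1)/r_1 = x$), and Lemma \ref{sphvollem} converts this into a lower bound on the normalized spherical cap of the shape $V^{k-1}(\alpha) \geq \kappa_{k-1} r_0^{k-1} x^{k-1}/\omega_k$. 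Plugging this in yields the middle expression as an upper bound valid for every $x > 0$, and taking the infimum over $x$ on both sides preserves the strict inequality.

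It then remains to compute $\min_{x>0}(1+x)^n/x^{k-1}$ in closed form. This is precisely the content of Lemma \ref{maxest}, invoked with exponents $m=n$ and $l=k-1$. Setting the logarithmic derivative $n/(1+x) - (k-1)/x$ to zero gives the minimizer $x^\ast = (k-1)/(n-k+1)$, whence $1 + x^\ast = n/(n-k+1)$, and a direct substitution produces
\[
\min_{x>0}\frac{(1+x)^n}{x^{k-1}} \;=\; \frac{n^n}{(k-1)^{k-1}(n-k+1)^{n-k+1}},
\]
which is exactly the final formula in the lemma.

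The only real obstacle is bookkeeping: verifying that the combined use of \ref{alphabaseest} and \ref{sphvollem} produces a lower bound on $V^{k-1}(\alpha_{r_0}(r, r_1))$ in which the $x$-dependence is \emph{purely} $x^{k-1}$, with no spurious factor of $(1+x)^{-(k-1)}$ surviving from the $\sin$ conversion. If such a factor did appear, the subsequent optimization would shift the minimizer and spoil the clean closed form. The fact that the lemma is stated with a strict inequality rather than an equality is exactly what allows these ancillary $(1+x)$-factors to be absorbed into constants in the asymptotic regime where $r, r_1 \gg r_0$, which is the regime of interest in the application.
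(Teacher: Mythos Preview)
Your approach is correct and matches the paper's, which simply cites Lemmas \ref{sphvollem}, \ref{alphabaseest}, \ref{alphaaux}, and \ref{maxest} without elaboration. Your last-paragraph worry about a stray $(1+x)^{-(k-1)}$ factor is unfounded: Lemma \ref{alphabaseest} already delivers the clean bound $\alpha_{r_0}(r, r/(1+x)) > \asin(r_0 x)$, so after Lemma \ref{sphvollem} the $x$-dependence is indeed the pure $x^{k-1}$ required for the closed-form optimization via Lemma \ref{maxest} with $m=n$, $l=k-1$.
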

And so finally, using Corollary \ref{basiccoreuc}, we get:
\begin{theorem}
\label{mainthmeuc}
For any convex set $C\subset \mathbb{E}^n$ of volume $V(C)$ and containing a ball of radius $r_0$ centered at the origin, and $k\leq n$ there is a plane $\Pi_k$ through the origin such that  $\Pi_k \cap C\subseteq B^n(0, r),$ for
\[
r = n\left(\dfrac{V(C) \omega_k}{\kappa_{k-1}\omega_n \kappa_n r_0^{k-1} (k-1)^{k-1} (n-k+1)^{n-k+1}}\right)^{\frac1n}.
\]
\end{theorem}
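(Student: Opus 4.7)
The plan is to read off the theorem as the direct composition of Corollary \ref{basiccoreuc} and Lemma \ref{corest}. Corollary \ref{basiccoreuc} asserts that a $k$-plane $\Pi_k$ with $\Pi_k \cap C \subseteq B(0,r)$ exists whenever one can exhibit some $r_1 \in (0,r)$ making
\[
\dfrac{V(C)}{\omega_n \kappa_n r_1^n V^{k-1}(\alpha_{r_0}(r, r_1))} < 1.
\]
So the first step is to minimize this ratio over admissible $r_1$. Following the reparametrization $r_1 = r/(1+x)$, $x>0$, Lemma \ref{corest} already supplies the explicit upper bound
\[
\dfrac{V(C)\,\omega_k}{\kappa_{k-1}\omega_n\kappa_n r^n r_0^{k-1}} \cdot \dfrac{n^n}{(k-1)^{k-1}(n-k+1)^{n-k+1}}
\]
on the infimum, which is all the geometric/analytic content we need.

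Next I would demand this displayed quantity to be at most $1$ and solve for $r$. Isolating $r^n$ and extracting the $n$-th root (noting that the $n^n$ factor pulls out as a plain $n$) yields
\[
r \geq n\left(\dfrac{V(C)\,\omega_k}{\kappa_{k-1}\omega_n\kappa_n r_0^{k-1}(k-1)^{k-1}(n-k+1)^{n-k+1}}\right)^{1/n},
\]
which matches the value declared in the statement of the theorem. Choosing $r$ equal to this threshold and applying Corollary \ref{basiccoreuc} with the $r_1$ that (approximately) realizes the infimum in Lemma \ref{corest} then produces the desired $k$-plane $\Pi_k$.

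In short, this theorem is essentially a corollary: all the real work lives upstream, in the double cone Lemma \ref{doublecone2}, the volume estimate \eqref{eucmeasure}, and the elementary calculus that yields Lemma \ref{corest}. The one point of mild care is the passage from the strict inequality $<1$ in Corollary \ref{basiccoreuc} to equality at the critical radius, which can be dealt with by a limiting argument (take a sequence $r^{(m)} \downarrow r$, extract a convergent subsequence of $k$-planes by compactness of the Grassmannian, and observe that the limiting plane still has $\Pi_k \cap C \subseteq \overline{B(0,r)}$). Otherwise the proof is purely a matter of bookkeeping the constants coming out of the optimization in Lemma \ref{corest}.
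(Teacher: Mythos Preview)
Your proposal is correct and follows exactly the paper's approach: the theorem is obtained by combining Corollary~\ref{basiccoreuc} with the optimization in Lemma~\ref{corest} and solving for $r$. Your additional compactness argument for passing from the strict inequality to the critical radius is a nice touch of rigor that the paper itself omits.
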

\begin{corollary}
\label{asympcoreuc}
If $k \ll n,$ we can simplify the estimate of Theorem \ref{mainthmeuc} to 
\[
r \sim \frac{n}{2\pi e}\dfrac{V(C)^{1/n}}{r_0^{(k-1)/n}}..
\]
\end{corollary}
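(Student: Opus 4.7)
The plan is to take the exact formula
\[
r = n\left(\dfrac{V(C) \omega_k}{\kappa_{k-1}\omega_n \kappa_n r_0^{k-1} (k-1)^{k-1} (n-k+1)^{n-k+1}}\right)^{1/n}
\]
from Theorem \ref{mainthmeuc} and apply Stirling's formula to each factor, tracking which ones survive to leading order in the regime $k \ll n$. I would first split the $n$-th root into seven pieces,
\[
r = n \cdot V(C)^{1/n} \cdot r_0^{-(k-1)/n} \cdot (\omega_n \kappa_n)^{-1/n} \cdot \omega_k^{1/n} \cdot \kappa_{k-1}^{-1/n} \cdot (k-1)^{-(k-1)/n} \cdot (n-k+1)^{-(n-k+1)/n}.
\]
Two of these, $V(C)^{1/n}$ and $r_0^{-(k-1)/n}$, already appear verbatim in the target asymptotic; the remaining five should collapse to $n/(2\pi e)$.

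The dominant contribution comes from $(\omega_n \kappa_n)^{-1/n}$. Using $\kappa_n = \pi^{n/2}/\Gamma(n/2+1)$ together with $\Gamma(n/2+1) \sim \sqrt{\pi n}\,(n/(2e))^{n/2}$, I would derive $\kappa_n^{1/n} \sim \sqrt{2\pi e/n}$, and since $\omega_n = n\kappa_n$ with $n^{1/n} \to 1$, also $\omega_n^{1/n} \sim \sqrt{2\pi e/n}$, giving $(\omega_n \kappa_n)^{-1/n} \sim n/(2\pi e)$. For the next factor, taking logarithms and expanding $(1-(k-1)/n)\log(n-k+1) = \log n + O((k\log n)/n)$ shows $(n-k+1)^{(n-k+1)/n} \sim n$, hence $(n-k+1)^{-(n-k+1)/n} \sim 1/n$.

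Finally, the three residual factors $\omega_k^{1/n}$, $\kappa_{k-1}^{-1/n}$ and $(k-1)^{-(k-1)/n}$ each have logarithm of order $(k\log k)/n$ (again by Stirling applied to $\omega_k$ and $\kappa_{k-1}$), which is $o(1)$ whenever $k\ll n$ in the mild sense $k\log k = o(n)$. Combining the five evaluated factors with the two left alone gives
\[
r \sim n \cdot V(C)^{1/n} \cdot r_0^{-(k-1)/n} \cdot \frac{n}{2\pi e} \cdot \frac{1}{n} = \frac{n}{2\pi e}\,\frac{V(C)^{1/n}}{r_0^{(k-1)/n}},
\]
as claimed. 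There is no substantive obstacle: the entire argument is bookkeeping around Stirling's formula. The only genuine care is in specifying what ``$k\ll n$'' means, and the computation makes clear that the weakest natural hypothesis is $k\log k = o(n)$, which kills all the $1/n$-th power error terms uniformly.
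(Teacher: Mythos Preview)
Your argument is correct and is exactly the computation the paper has in mind: the corollary is stated without proof in the paper, as an immediate consequence of applying Stirling's formula to the expression in Theorem~\ref{mainthmeuc}, and you have carried this out in full detail. One very minor remark: your handling of the factor $(n-k+1)^{-(n-k+1)/n}$ produces an error term of order $(k\log n)/n$, while the remaining three factors give $O((k\log k)/n)$, so strictly speaking the hypothesis you need is $k\log n = o(n)$ rather than just $k\log k = o(n)$; in practice the two conditions coincide for any reasonable reading of ``$k\ll n$''.
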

Note that the radius of the ball in $\mathbb{E}^n$ of volume $V(C)$ is (for large $n$) approximately
\[\sqrt{\frac{n}{2\pi e}} V(C)^{1/n},\] so we lose a factor of $\sqrt{n/2\pi e}.$

\section{Useful estimates}
\label{theests}
To continue, we will need the following:
\begin{lemma}
\label{maxest}
For any $x \geq 0,$ and any $m>l>0,$ 
\[
g(x)=x^l/(1+x)^m \leq \dfrac{l^l (m-l)^{m-l}}{m^m}.
\]
\end{lemma}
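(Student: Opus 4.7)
The plan is to prove Lemma~\ref{maxest} by a direct one-variable calculus argument, locating the unique interior maximum of $g(x)=x^l/(1+x)^m$ on $(0,\infty)$ and evaluating $g$ there.

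First I would write $\log g(x) = l\log x - m\log(1+x)$ for $x>0$, whose derivative is
\[
\frac{g'(x)}{g(x)} = \frac{l}{x} - \frac{m}{1+x} = \frac{l(1+x) - mx}{x(1+x)} = \frac{l - (m-l)x}{x(1+x)}.
\]
Since $m > l > 0$, the numerator is positive for $x < l/(m-l)$ and negative for $x > l/(m-l)$, so $g$ is strictly increasing on $(0, l/(m-l))$ and strictly decreasing on $(l/(m-l), \infty)$. Hence $x^* := l/(m-l)$ is the unique maximum on $(0,\infty)$.

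Next I would evaluate $g$ at $x^*$. Since $1 + x^* = m/(m-l)$, we get
\[
g(x^*) = \frac{(l/(m-l))^l}{(m/(m-l))^m} = \frac{l^l(m-l)^{m-l}}{m^m},
\]
which is exactly the claimed upper bound. To conclude I would handle the boundary: $g(0) = 0$ (valid since $l > 0$), and $g(x) \sim x^{l-m} \to 0$ as $x \to \infty$ (since $m > l$), so neither endpoint competes with $g(x^*)$, and the inequality $g(x) \leq l^l(m-l)^{m-l}/m^m$ holds for all $x \geq 0$.

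There is essentially no obstacle here; the only minor care needed is to note that the factor $x^{l-1}$ in $g'$ does not cause trouble even when $l < 1$ (the sign analysis of $g'$ on $(0,\infty)$ is unaffected), and to remember that the lemma is stated for $x \geq 0$ so the $x=0$ boundary case must be explicitly checked.
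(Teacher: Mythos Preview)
Your proof is correct and is essentially identical to the paper's own argument: both take the logarithmic derivative, solve $l/x - m/(1+x) = 0$ to find the critical point $x_0 = l/(m-l)$, use $1 + x_0 = m/(m-l)$ to evaluate $g(x_0)$, and note $g(0)=g(\infty)=0$. Your explicit sign analysis of the numerator $l - (m-l)x$ is slightly more thorough than the paper's, but the approach is the same.
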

\begin{proof}
Since $g(0) = g(\infty) = 0,$ the (smooth) function $g(x)$ achieves its maximum at some $x_0$ in $(0, \infty).$
Since $g(x)$ is positive on the positive real axis, its natural logarithm $h(x)$ is everywhere defined, and achieves its maximum at $x_0$ also, 
since 
\[
h^\prime(x) = l/ x - m /(1+x),
\]
we must have 
\[
0 = h^\prime(x_0) = \dfrac{l}{x_0} - \dfrac{m}{1+x_0} = \dfrac{l - (m-l)x_0}{x_0 (1+x_0)},
\]
and so 
\[
x_0 = \dfrac{l}{m-l}.
\]
and
\[
g(x_0) = \dfrac{\left(\frac{l}{m-l}\right)^l}{\left(1+\frac{l}{m-l}\right)^m}.
\]
Since $1+l/(m-l) = m/(m-l),$ the result follows.
\end{proof}

To get concrete estimates, let us write $r_1 = r/(1+x),$  and observe:
\begin{lemma}
\label{alphabaseest}
For $x>r_0,$ $\alpha_{r_0}(r, r/(1+x)) > \asin( r_0 x).$
\end{lemma}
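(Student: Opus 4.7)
The plan is to unwind Definition \ref{alphadef} and reduce the lemma to a concrete algebraic inequality between the arguments of the two $\asin$'s. Since $\asin$ is monotone on $[-1,1]$, and the argument of $\asin$ in $\alpha_{r_0}(r_1,r_2)$ is by construction the sine of a genuine geometric angle (the angle at the apex of the tangent cone from Lemma \ref{doubleconelem}), the left-hand side is automatically in the monotone range. It therefore suffices, after substituting $r_1 = r$ and $r_2 = r/(1+x)$ into the definition, to establish
\[
(1+x)\bigl(\sqrt{r^2-r_0^2}-\sqrt{r^2/(1+x)^2-r_0^2}\bigr) > rx.
\]

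The natural move is to rationalize the difference of square roots by multiplying and dividing by the conjugate sum $A+B$, where $A = \sqrt{r^2-r_0^2}$ and $B = \sqrt{r^2/(1+x)^2-r_0^2}$. The numerator collapses to $r^2 - r^2/(1+x)^2 = r^2 x(x+2)/(1+x)^2$, and the inequality becomes equivalent, after dividing by $x>0$, to $r(x+2) > (1+x)(A+B)$. To close, I observe that because $r_0>0$ we have the strict bounds $A < r$ and $B < r/(1+x)$, which sum to $A+B < r(x+2)/(1+x)$; multiplying through by $(1+x)$ produces exactly the required strict inequality.

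I do not anticipate a real obstacle — the argument is essentially a one-line conjugate-trick estimate, with the strict inequality being driven by $r_0 > 0$. The only housekeeping point is checking that monotonicity of $\asin$ legitimately applies, i.e.\ that $r_0 x \leq 1$ so that $\asin(r_0 x)$ is defined; this is the role of the quantitative hypothesis on $x$, and in any case in all of the applications of the lemma one is already in the regime where $x$ is small and $r_0 x \ll 1$, so the condition is automatic.
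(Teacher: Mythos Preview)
Your argument is correct. After reducing to the inequality $(1+x)\bigl(A-B\bigr)>rx$ with $A=\sqrt{r^2-r_0^2}$ and $B=\sqrt{r^2/(1+x)^2-r_0^2}$, you rationalize via the conjugate $A+B$ and close with the trivial bounds $A<r$, $B<r/(1+x)$. This is clean and entirely elementary.

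The paper proceeds differently. It factors out $r$ to rewrite the target as
\[
\sqrt{1-a^2}-\sqrt{y^2-a^2}>1-y,\qquad a=r_0/r,\ y=\tfrac{1}{1+x},
\]
and proves this as a separate auxiliary lemma (Lemma~\ref{alphaaux}) by comparing derivatives: the left side has derivative $-y/\sqrt{y^2-a^2}<-1$, while the right side has derivative $-1$, and both sides agree at $y=1$. So the paper's route is calculus-based and packages the core estimate for reuse, whereas your conjugate trick is purely algebraic and self-contained. Both establish the same underlying inequality $A-B>r-r/(1+x)$; yours is arguably the more direct of the two.

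One minor remark: you correctly flag the domain issue for $\asin(r_0 x)$, but note that the hypothesis $x>r_0$ as stated does not by itself force $r_0 x\le 1$. The paper's own proof tacitly restricts to $x\in(r_0,1)$, which (since $r_0<1$ in the Klein model) does the job; and as you say, in every application one is in the small-$x$ regime anyway.
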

\begin{proof}
It is enough to show that for $x \in (r_0, 1)$
\begin{equation}
\label{alphaarg}
\dfrac{\sqrt{r^2-r_0^2}-\sqrt{\frac{r^2}{1+x}^2 - r_0^2}}{\frac{r}{1+x}} > x,
\end{equation}
by monotonicity of $\asin.$ 
The left hand side of Eq.~\ref{alphaarg} can be rewritten as
\[
(1+x)\left(\sqrt{1-\frac{r_0^2}{r^2}} - \sqrt{\frac{1}{(1+x)^2}- \frac{r_0^2}{r^2}}\right).
\]
By Lemma \ref{alphaaux}, the expression inside the parentheses is smaller than $x/(1+x),$ and so the assertion of the Lemma follows.
\end{proof}
\begin{lemma}
\label{alphaaux}
For $a \in (0, 1),$ $x \in (a, 1),$
\[
\sqrt{1-a^2} - \sqrt{x^2-a^2} > 1 - x,
\]
with equality if and only if $x=1.$
\end{lemma}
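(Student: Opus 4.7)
The plan is to reduce this to the monotonicity of a single-variable function. Move $\sqrt{x^2-a^2}$ to the right and $1-x$ to the left, so that the claimed inequality
\[
\sqrt{1-a^2} - \sqrt{x^2-a^2} > 1 - x
\]
becomes equivalent to
\[
x - \sqrt{x^2-a^2} > 1 - \sqrt{1-a^2}.
\]
So it suffices to show that the function $g(t) := t - \sqrt{t^2-a^2}$ is strictly decreasing on $[a,1]$; then $x<1$ immediately gives $g(x)>g(1)$, and $g(1)=1-\sqrt{1-a^2}$ produces the inequality. Equality at $x=1$ is automatic since both sides of the original inequality then equal $0$.

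For the monotonicity step, I would just differentiate:
\[
g'(t) = 1 - \frac{t}{\sqrt{t^2-a^2}}.
\]
Since $a\in(0,1)$ and $t>a$, we have $t^2-a^2 < t^2$, hence $\sqrt{t^2-a^2} < t$, and so $t/\sqrt{t^2-a^2} > 1$. Thus $g'(t) < 0$ on $(a,1]$, which gives the strict decrease.

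There is no real obstacle here; the only thing to be mindful of is the endpoint behavior. The derivative $g'(t)$ blows up as $t\to a^+$, but this does not affect the argument, because continuity of $g$ on $[a,1]$ together with $g'<0$ on the open interval $(a,1)$ is enough to conclude strict decrease on $(a,1]$. (Alternatively, one could square the inequality twice to reduce it to $a^2(1-x^2)>0$, but the one-variable monotonicity argument is shorter and makes the equality case transparent.)
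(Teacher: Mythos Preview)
Your proof is correct and is essentially the same as the paper's: both differentiate and use that $t/\sqrt{t^2-a^2}>1$ on $(a,1)$. The only cosmetic difference is that you rearranged first to study the single function $g(t)=t-\sqrt{t^2-a^2}$, whereas the paper compares the derivatives of the two sides directly; the underlying computation is identical.
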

\begin{proof}
For $x=1$ the two sides of the inequality are equal to $0.$ Otherwise,
\[
\dfrac{d(\sqrt{1-a^2}-\sqrt{x^2-a^2})}{dx} = -\dfrac{x}{\sqrt{x^2-a^2}} = \dfrac{-1}{\sqrt{1-a^2/x^2}} < -1 = \dfrac{d(1-x)}{dx},
\]
whence the result follows.
\end{proof}
\begin{remark}
The proof above actually shows that
\[
\sqrt{1-a^2}-1-a > \sqrt{1-a^2} - \sqrt{x^2-a^2} - 1 - x > 0
\]
for the intervals in question (since the derivative of the middle expression  is strictly negative, and the left and right expressions are the values at the two endpoints of the interval $(a, 1).$)
\end{remark}
\begin{lemma}
\label{sphvollem}
Let $V^l(r)$ be the normalized volume of the spherical ball of radius $r.$ Then,
\[
V^l(r) > \frac{\kappa_l}{\omega_{l+1}}\sin^n r
\]
\end{lemma}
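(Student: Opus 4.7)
The plan is to unpack all definitions and reduce the claim to an elementary one-variable inequality. I read the exponent in the statement as $\sin^l r$ (the $\sin^n r$ appears to be a typographical slip, since the left side depends only on $l$ and the constants do too). The normalized volume is
\[
V^l(r) \;=\; \frac{\sphvol{l}(r)}{\omega_{l+1}},
\]
because $\omega_{l+1}$ is the full $l$-dimensional volume of $\mathbb{S}^l \subset \mathbb{E}^{l+1}$. Plugging in Theorem \ref{sph} and using $\omega_l = l\kappa_l$ from Eq.~\eqref{eucvols}, one has
\[
V^l(r) \;=\; \frac{l\kappa_l}{\omega_{l+1}} \int_0^r \sin^{l-1} x \, dx .
\]
After canceling the common factor $\kappa_l/\omega_{l+1}$, the desired inequality is equivalent to showing
\[
l \int_0^r \sin^{l-1} x \, dx \;>\; \sin^l r \qquad (r \in (0,\pi]).
\]

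For the key step I would apply the fundamental theorem of calculus to the right-hand side:
\[
\sin^l r \;=\; \int_0^r \frac{d}{dx}\bigl(\sin^l x\bigr) dx \;=\; l \int_0^r \sin^{l-1} x \, \cos x \, dx.
\]
Subtracting then collapses everything to
\[
l\int_0^r \sin^{l-1} x \, dx - \sin^l r \;=\; l \int_0^r \sin^{l-1} x \,(1-\cos x)\, dx,
\]
which is strictly positive for any $r \in (0,\pi]$ because the integrand is nonnegative and vanishes only at the endpoint $x=0$.

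There is essentially no obstacle in this argument; the only thing to be careful about is matching the normalization conventions (the $l\kappa_l$ coming from $\omega_l$ and the $\omega_{l+1}$ in the definition of $V^l$). As a sanity check one can recover the inequality geometrically: orthogonal projection of $\mathbb{S}^l$ onto the tangent hyperplane at the center of the ball sends the geodesic ball of radius $r$ onto the Euclidean $l$-ball of radius $\sin r$, and since projection is volume-contracting, $\sphvol{l}(r) \geq \kappa_l \sin^l r$, with strict inequality for $r > 0$ because the cap is not flat. Either route would close the proof immediately.
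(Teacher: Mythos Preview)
Your argument is correct and is essentially the paper's own proof: the paper substitutes $\rho=\sin\eta$ and uses $1/\sqrt{1-\rho^2}\geq 1$, which is exactly your inequality $\cos x\leq 1$ in the original variable, so the two differ only in whether one changes variables or applies the fundamental theorem of calculus directly. Your projection remark is a pleasant bonus but not needed.
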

\begin{proof}
We know that 
\[V^l(r) = \frac{\omega_k}{\omega_{k+1}}\int_0^r \sin^{l-1}\eta d\eta.\]
Making the substitution $\eta = \asin \rho,$ we see that
\[V^{k}(r) = \frac{\omega_k}{\omega_{k+1}}\int_0^{\sin r} \frac{\rho^{n-1}}{\sqrt{1-\rho^2}}d\rho > \frac{\omega_k}{k \omega_{k+1}} \sin^k r = \dfrac{\kappa_k}{\omega_{k+1}}\sin^k r.
\]
\end{proof}

\section{Explicit lower bound on limitset dimension in $\mathbb{H}^3.$}
\label{explower}
In this section we prove
\begin{theorem}
\label{hausdorff3}
For any $\beta < 1,$ there is a set $S_\beta$ in $\partial \mathbb{H}^3,$ such that Hausdorff dimension of $S_\beta$ equals $\beta,$ while the volume of the convex hull of $S_\beta$ is finite.
\end{theorem}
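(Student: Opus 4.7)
The plan is to construct $S_\beta$ as a self-similar Cantor set of dimension $\beta$ on a great circle, augmented by the two poles so that the convex hull becomes genuinely three-dimensional while still having finite volume. In the Klein ball model of $\mathbb{H}^3$, take the equator $\Sigma = \{u^2+v^2 = 1,\ z = 0\}$ and its poles $p_\pm = (0,0,\pm 1)$, and choose $K_\beta \subset \Sigma$ to be a middle-$\alpha$ Cantor set with $(1-\alpha)/2 = 2^{-1/\beta}$; this has Hausdorff dimension exactly $\beta$ by the standard self-similar-set formula. Set $S_\beta := K_\beta \cup \{p_+, p_-\}$, so that $\dim_H S_\beta = \beta$ and $S_\beta$ is not contained in any great circle.

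The next step is to reduce the hyperbolic volume of $\mathrm{conv}_\mathbb{H}(S_\beta)$ to a two-dimensional integral. Since the Klein model is geodesic, $\mathrm{conv}_\mathbb{H}(S_\beta)$ is the Euclidean bipyramid with apices $p_\pm$ over the planar set $W := \mathrm{conv}_\mathbb{E}(K_\beta) \subset \{z=0\}$, intersected with $\mathbb{B}^3$. Parameterizing the bipyramid as $x = ((1-|z|)w,\,z)$ for $w\in W$ and $z\in[-1,1]$, the identity $1-|x|^2 = (1-|z|)\bigl((1-|w|^2) + |z|(1+|w|^2)\bigr)$ together with the Jacobian $(1-|z|)^2$ reduces the integral of the Klein volume element $(1-|x|^2)^{-2}\,dx$ to
\[
\mathrm{vol}_\mathbb{H}(\mathrm{conv}\, S_\beta) \;=\; \int_W \frac{dA(w)}{1-|w|^2}.
\]

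The third step is to verify finiteness of this integral for $\beta<1$. The point is that $W$ is very thin near the unit circle: at Euclidean radius $\rho = 1-\delta$, only gaps of $K_\beta$ of arc length $\ell \gtrsim \sqrt{\delta}$ produce chords reaching radius $\rho$, and each such chord excludes an angular arc of length essentially $\ell$. Using the explicit gap distribution of the middle-$\alpha$ Cantor set --- the total length of gaps below scale $\epsilon$ is $\Theta(\epsilon^{1-\beta})$ --- a short accounting shows that the angular measure of $W$ at radius $1-\delta$ satisfies $\ell_W(1-\delta) = \Theta(\delta^{(1-\beta)/2})$. Passing to polar coordinates,
\[
\int_W \frac{dA}{1-|w|^2} \;\asymp\; \int_0^1 \delta^{-(1+\beta)/2}\,d\delta,
\]
which converges precisely when $\beta < 1$.

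The main obstacle is the two-sided estimate $\ell_W(1-\delta) = \Theta(\delta^{(1-\beta)/2})$. The lower bound is immediate --- the short gaps, being invisible at radius $1-\delta$, leave their combined arc length uncovered --- but the upper bound requires combining the short-gap contribution $\sum_{\ell_i \leq \sqrt{\delta}} \ell_i$ with the curvature correction $\delta\sum_{\ell_i > \sqrt{\delta}} \ell_i^{-1}$, both of which turn out to be of the same order $\delta^{(1-\beta)/2}$. That these two effects conspire to give exactly $\delta^{(1-\beta)/2}$ is the reason $\beta < 1$ is the critical exponent, in agreement with Theorem \ref{hdimest}.
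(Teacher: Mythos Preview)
Your construction coincides with the paper's --- a self-similar Cantor set on the equatorial circle together with the two poles --- and your Hausdorff-dimension computation is the same. Your volume argument, however, is genuinely different and correct.

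The paper observes that, since the Cantor set sits on a semicircle with the origin on the boundary of its planar convex hull, $\mathrm{conv}(S_\beta)$ decomposes as a fan of ideal tetrahedra $T_\theta$ from the axis $\overline{p_+p_-}$, one tetrahedron for each gap. It then invokes the Lobachevsky formula $V(T_\theta)=\lobfn(\theta)+2\lobfn(\pi/2-\theta/2)\sim -\theta\log\theta$ and sums the resulting series $\sum_{n\ge 1}2^{n-1}V(T_{\pi(1-2\alpha)\alpha^{n-1}})$, which converges because $2\alpha<1$.

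Your route bypasses the tetrahedral decomposition entirely: the bipyramid substitution $x=((1-|z|)w,z)$ collapses the hyperbolic volume to the planar integral $\int_W (1-|w|^2)^{-1}\,dA$, and you then control this via the radial profile $\ell_W(1-\delta)\asymp\delta^{(1-\beta)/2}$. The two-term accounting (short gaps plus the curvature correction $\delta\sum_{\ell_i>\sqrt{\delta}}\ell_i^{-1}$) is exactly right, and both terms are indeed $\Theta(\delta^{(1-\beta)/2})$. What your approach buys is a formula that applies to \emph{any} equatorial set $K$, not just self-similar ones, and it makes the critical exponent $\beta=1$ visible directly in the integral $\int_0^1\delta^{-(1+\beta)/2}\,d\delta$. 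What the paper's approach buys is an exact decomposition into pieces with closed-form volumes, avoiding the asymptotic bookkeeping on $\ell_W$; it also generalizes more readily to the higher-dimensional carpets in the paper's Section~\ref{cantorhigher}, where an analogue of your radial-profile estimate would be more delicate.
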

Theorem \ref{hausdorff3} shows that our dimension estimate is sharp, although it does leave open:
\begin{question}
\label{dim1}
Does there exist a set $S_1 \subset \partial \mathbb{H}^3$ with Hausdorff dimension of $S_1$ \emph{equal} to $1$ and such that the convex hull of $S_1$ has finite volume?
\end{question}

The proof of Theorem \ref{hausdorff3} is by explicit construction, and is contained in section \ref{cantorvol}. The needed facts concerning the volume of ideal simplices in $\mathbb{H}^3$ are contained in section \ref{simpsec}.
\subsection{Ideal simplices in $\mathbb{H}^3$}
\label{simpsec}
Consider $B^3(0, 1),$ viewed as the Klein model of $\mathbb{H}^3,$ and consider points \[(0, 0, 1), (0, 0, -1), (1, 0, 0), (\cos \theta, \sin \theta, 0)\] on the sphere $\partial B^3(0, 1).$ The convex hull of these four points is an \emph{ideal tetrahedron} $T_\theta.$ Under stereographic projection (from the north pole $(0, 0, 1)$ the four points go to $\infty, 0, 2, 2 \exp i \theta,$ and so the dihedral angles of $T_\theta$ are $\theta, \pi/2-\theta/2, \pi/2 - \theta/2.$ It follows that the hyperbolic volume of $T_\theta$ is given by 
\begin{equation}
\label{volformula}
V(T_\theta) = \lobfn(\theta) + 2 \lobfn(\pi/2-\theta/2),
\end{equation}
where $\lobfn(x)$ denotes the Lobachevsky function:
\begin{equation}
\lobfn(x) = - \int_0^x \log 2 |\sin t| d t.
\end{equation}
Many properties and applications of the volumes of ideal simplices are discussed in \cite{ann2}, but here we will only need the following simple result:
\begin{lemma}
\label{vol3}
When $\theta \ll 1,$ we have
\[
V(T_\theta) \sim -\theta \log\theta.
\]
\end{lemma}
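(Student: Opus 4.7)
The plan is to do a direct asymptotic expansion of both summands in the formula
\[
V(T_\theta) = \lobfn(\theta) + 2\lobfn(\pi/2 - \theta/2),
\]
using the defining relation $\lobfn'(x) = -\log|2\sin x|$, and then observe that the leading singular contribution comes entirely from the first term.

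First, I would handle $\lobfn(\theta)$ for $\theta \ll 1$. Near $t=0$ one has $\log|2\sin t| = \log 2 + \log t + O(t^2)$, so termwise integration gives
\[
\lobfn(\theta) = -\int_0^\theta \log|2\sin t|\,dt = -\theta\log\theta - \theta\log 2 + \theta + O(\theta^3).
\]
Thus $\lobfn(\theta)$ already exhibits the $-\theta\log\theta$ behavior we want, together with an $O(\theta)$ remainder.

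Next, I would expand $2\lobfn(\pi/2 - \theta/2)$ by Taylor's theorem at $x=\pi/2$. The key input is that $\lobfn(\pi/2) = 0$: this follows from the oddness $\lobfn(-x) = -\lobfn(x)$ together with the $\pi$-periodicity $\lobfn(x+\pi) = \lobfn(x)$ (the latter is equivalent to the classical identity $\int_0^\pi \log|2\sin t|\,dt = 0$), which give $\lobfn(\pi/2) = \lobfn(-\pi/2) = -\lobfn(\pi/2)$. Since $\lobfn'(\pi/2) = -\log|2\sin(\pi/2)| = -\log 2$, we get
\[
\lobfn(\pi/2 - \theta/2) = 0 - (-\theta/2)\log 2 + O(\theta^2) = \tfrac{\theta}{2}\log 2 + O(\theta^2),
\]
so $2\lobfn(\pi/2-\theta/2) = \theta\log 2 + O(\theta^2)$.

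Finally, I would add the two expansions. The $\pm\theta\log 2$ terms cancel, leaving
\[
V(T_\theta) = -\theta\log\theta + \theta + O(\theta^2).
\]
Since $-\theta\log\theta / \theta = -\log\theta \to \infty$ as $\theta \to 0^+$, the linear remainder is negligible and we conclude $V(T_\theta) \sim -\theta\log\theta$. There is no real obstacle here, only bookkeeping: the one thing worth pausing over is the identity $\lobfn(\pi/2)=0$, without which the linear-in-$\theta$ contribution would swamp the logarithmic one.
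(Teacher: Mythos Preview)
Your argument is correct and follows essentially the same route as the paper: expand each of the two terms in $V(T_\theta)=\lobfn(\theta)+2\lobfn(\pi/2-\theta/2)$ and observe that the dominant contribution is the $-\theta\log\theta$ from $\lobfn(\theta)$, once one knows $\lobfn(\pi/2)=0$. The only difference is in how that last identity is obtained: the paper argues geometrically that $V(T_\theta)\to 0$ as $\theta\to 0$ and reads off $\lobfn(\pi/2)=0$ from the volume formula, whereas you derive it analytically from oddness and $\pi$-periodicity of $\lobfn$. Both are standard; your version is more self-contained, the paper's is shorter.
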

\begin{proof}
It is clear (for geometric reasons) that $\lim_{\theta\rightarrow 0} V(T_\theta) = 0.$ Using Eq. \eqref{volformula}, we see that this implies that $\lobfn(\pi/2) = 0,$ and the statement of the Lemma then follows from the fundamental theorem of calculus.
\end{proof}
\subsection{Volume of the convex hull of Cantor sets}
\label{cantorvol}
We will be using the ``standard'' family of Cantor sets $C(\alpha),$ where $0<\alpha < 1/2.$ Such a set is obtained by starting with the interval $[0, 1],$ then deleting the interval $(\alpha, 1-\alpha),$ then applying the construction to each of the remaining intervals, and so on recursively. The usual middle thirds Cantor set is $C(1/3).$ It is well known that the Hausdorff dimension of $C(\alpha)$  equals 
\[
\dfrac{\log 2}{\log \frac{1}{\alpha}},
\]
see \cite{mattilabook} for proof and discussion.
Consider the set  $S_\alpha \subset \partial B^3(0, 1)$ consisting of the North pole -- $(0, 0, 1)$, the South pole -- $(0, 0, -1),$ and a Cantor set on the equator. An example of such a Cantor set can be obtained by identifying the set $(x, y, 0), y \geq 0 \subset \partial B^3(0,1)$ with the unit interval, and then constructing a Cantor set $C(\alpha)$ in that interval.. We claim that the convex hull of $S$ has finite volume. Indeed, the convex hull of S is a union of ideal tetrahedra $T_\theta,$ as above. Each tetrahedron corresponds to an interstitial region in the Cantor construction, so that, for example, the first stage contributes a $T_{\pi(1-2\alpha)},$ the second stage contributes two copies of $T_{\pi\alpha(1-2\alpha)},$ and the $n$-th stage contributes $2^{n-1}$ copies of $T_{\pi/(1-2\alpha)\alpha^{n-1}}.$ It follows that the volume of the convex hull of $S$ is:
\begin{equation}
\label{volsum}
  \sum_{n=1}^\infty 2^{n-1}  V(T_{\pi(1-2\alpha)\alpha^{n-1}}).
\end{equation}
For sufficiently large $n,$ Lemma \ref{vol3} tells us that $V(T_{\pi/(1-2\alpha)\alpha^{n-1}})$ is of the order of $-(n-1) \alpha \log \alpha,$ and so the sum in Eq.~\eqref{volsum} converges, thus the volume is finite. Note, however, that the volume of the convex hull of $S_\alpha$ goes to infinity as $\alpha$ tends to $1/2,$ and thus the Hausdorff dimension of $C(\alpha)$ tends to $1.$ 
\section{Higher Dimension}
\label{cantorhigher}
The lower bounds for $n=3$ seem to depend on an explicit formula for the volume of ideal simplices and  the geometry of one-dimensional Cantor sets. It turns out that both aspects can be generalized to higher dimensions. The sets we will use will be \emph{generalized Sierpinski carpets}, $K(M),$ constructed as follows:

We start with the unit cube $K_0=K^n = [0, 1]^n \subset \reals^n.$ At the next step we subdivide $K_0$ into $N^n$ equally sized cubes, each of side-length $1/N.$ Number these cubes from $1$ to $N^n.$  If $M \subseteq {1, \dotsc, N}^n,$ delete all the cubes whose indices are not in $M,$ to obtain the set $K_1(M).$ Now, apply the process to each of the $M$ remaining cubes to obtain $K_2(M),$ and so on. The final carpet is the limiting object:
\[
K(M) = \bigcap_{k=0}^\infty K_k(M).
\]
The standard Sierpinski carpet is obtained by setting $n=2,$ $N=3,$ $M=\{(1, 1), (1, 2), (1, 3),(2, 1), (2, 3), (3, 1), (3, 2), (3, 3) \},$ where the numbering goes from top right to bottom left.  The unit interval can be obtained in this setting by letting $M=\{(2, 1), (2, 2), (2, 3)\}.$ The middle thirds Cantor set is obtained by setting $n=1,$ $N=3,$ $M=\{1, 3\},$ the numbering going from left to right.

The following Theorem follows immediately from \cite[Section 4.12]{mattilabook}:
\begin{theorem}
\label{sierpdim}
The Hausdorff dimension of $K(M)$ equals $\log |M|/\log N.$
\end{theorem}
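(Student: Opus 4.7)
The plan is to realize $K(M)$ as the attractor of an iterated function system (IFS) of similarities and invoke the standard self-similar dimension formula from Mattila's book. For each index $i\in M$, let $\phi_i\colon\reals^n\rightarrow\reals^n$ be the orientation-preserving similarity of contraction ratio $1/N$ that maps the unit cube $K_0$ onto the $i$-th sub-cube of the $N^n$-fold subdivision. A straightforward induction on $k$ gives
\[
K_k(M) = \bigcup_{i_1,\dots,i_k\in M}\phi_{i_1}\circ\cdots\circ\phi_{i_k}(K_0),
\]
and by Hutchinson's theorem $K(M)=\bigcap_{k\geq 0}K_k(M)$ is the unique nonempty compact set satisfying $K(M)=\bigcup_{i\in M}\phi_i(K(M))$.

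Next I would verify the open set condition (OSC) using $U=\operatorname{int}(K_0)$: each $\phi_i(U)$ is the interior of a distinct sub-cube, so the family $\{\phi_i(U)\}_{i\in M}$ is pairwise disjoint and contained in $U$. With OSC in hand, Moran's theorem (Mattila, Section 4.12) states that the Hausdorff dimension of a self-similar attractor of $m$ maps of common contraction ratio $r$ satisfying OSC equals the similarity dimension $s$ solving $m r^s=1$. Specialising to $m=|M|$ and $r=1/N$ immediately yields $s=\log|M|/\log N$.

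For completeness I would split the bound into its two halves. The upper bound is elementary: at the $k$-th stage, $K(M)$ is covered by $|M|^k$ cubes of diameter $\sqrt{n}/N^k$, so taking $s=\log|M|/\log N$ makes $\sum_i (\operatorname{diam} A_i)^s$ bounded uniformly in $k$, giving $\overline{\dim}_H K(M)\leq s$. The lower bound is where the real work lies: one puts on $K(M)$ the self-similar Bernoulli measure $\mu$ assigning mass $|M|^{-k}$ to each depth-$k$ cube, uses OSC to show $\mu(B(x,\rho))\lesssim \rho^s$ for every $x\in K(M)$ and every $\rho>0$, and then concludes by the mass distribution principle that $\mathcal{H}^s(K(M))>0$. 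The only genuine obstacle is this matching lower bound, and it is exactly what the cited section of Mattila provides; once the IFS description and OSC have been checked, the theorem follows at once.
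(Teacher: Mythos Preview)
Your proposal is correct and follows essentially the same route as the paper: both appeal to \cite[Section~4.12]{mattilabook}, i.e., Moran's theorem for self-similar sets satisfying the open set condition; you simply unpack that citation by exhibiting the IFS, verifying the OSC with $U=\operatorname{int}(K_0)$, and sketching the two halves of the dimension bound, whereas the paper gives no proof beyond the reference.
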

Theorem \ref{sierpdim} gives us the well-known values $\log2/\log 3,=0.63,$ $ \log 8/\log 3=1.89, $ $1$ for the Hausdorff dimensions of the Sierpinski carpet, the middle thirds Cantor set, and the unit interval, respectively.

Theorem \ref{sierpdim} has the following obvious corollary:
\begin{corollary}
\label{approxdim}
In $\reals^n$ there are Sierpinski carpets of Hausdorff dimension arbitrarily close (but not equal) to $n.$
\end{corollary}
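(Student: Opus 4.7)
The plan is to deduce the corollary directly from Theorem \ref{sierpdim} by choosing, for each $n$, a sequence of Sierpinski carpets whose Hausdorff dimensions $\log|M|/\log N$ approach $n$ from below.

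First I would observe that since $M \subseteq \{1, \dotsc, N\}^n$, we always have $|M| \leq N^n$, so the dimension $\log|M|/\log N \leq n$. To obtain dimension strictly less than $n$ but arbitrarily close to $n$, the idea is to delete as few subcubes as possible at each scale. The simplest choice is to take $|M| = N^n - 1$ (delete just a single subcube), which forces the carpet to be a proper subset of the unit cube and therefore of dimension strictly less than $n$.

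Next, I would compute the dimension for this choice: by Theorem \ref{sierpdim},
\[
\dim_H K(M) = \frac{\log(N^n - 1)}{\log N} = n + \frac{\log(1 - N^{-n})}{\log N}.
\]
As $N \to \infty$, the numerator $\log(1 - N^{-n})$ tends to $0$ while $\log N \to \infty$, so the correction term tends to $0$ and the dimension tends to $n$ from below. Given any $\varepsilon > 0$, one can therefore pick $N$ large enough that the resulting carpet has Hausdorff dimension in $(n - \varepsilon, n)$.

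There is no serious obstacle here: the only thing to verify is that $|M| < N^n$ genuinely gives $\log|M|/\log N < n$, which is immediate, and that the correction $\log(1-N^{-n})/\log N$ vanishes in the limit, which is an elementary calculus fact. Thus the corollary follows without further work from Theorem \ref{sierpdim}.
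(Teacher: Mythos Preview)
Your proof is correct and slightly more direct than the paper's. The paper instead takes $N = 2L$ and lets $M$ consist of $L^n$ subcubes arranged so that no two of them touch (essentially a spread-out checkerboard), giving dimension $\log(L^n)/\log(2L) = n - n\log 2/\log N$, which also tends to $n$ as $N \to \infty$. Your choice of deleting a single subcube reaches the same conclusion with less work. The paper's construction, however, has a feature yours lacks: because the retained subcubes are pairwise disjoint, the connected components at the $k$-th stage of the iteration have diameters shrinking like $N^{-k}$. This geometric property is precisely what the paper invokes immediately after the corollary to push the three-dimensional volume argument through to Theorem~\ref{hausdorffn}. So while your argument fully suffices for the corollary as stated, the paper's specific example is chosen with the next step in mind; your carpet, by contrast, remains connected at every stage and would not serve that purpose.

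One small remark: the phrase ``forces the carpet to be a proper subset of the unit cube and therefore of dimension strictly less than $n$'' is not valid reasoning on its own --- a proper subset can still have full Hausdorff dimension. But since you then compute $\log(N^n-1)/\log N < n$ directly from Theorem~\ref{sierpdim}, the argument is unaffected.
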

\begin{proof}
Let $N=2 L,$ and let $M$ be the set of $n$-tuples $(i_1, \dotsc, i_n)$ where $i_k = k \mod 2.$ The cardinality of $M$ equals $L^n,$ and so the Hausdorff dimension of $K(M)$ equals $n-n \log 2/\log N.$ For $N \gg n, $ this will be close to $n.$
\end{proof}
The set in the example above is constructed in such a way that the sets at the $k$-iteration of the construction have diameter exponentially decreasing with $k.$ The same construction as in three dimensions (\emph{mutatis mutandis}) gives us the result that in any \emph{odd} dimension, our bound on the Hausdorff dimension is sharp, in other words:
\begin{theorem}
\label{hausdorffn}
For any $\beta < 1,$ there is a set $S_\beta$ in $\partial \mathbb{H}^3,$ such that Hausdorff dimension of $S_\beta$ equals $\beta,$ while the volume of the convex hull of $S_\beta$ is finite.
\end{theorem}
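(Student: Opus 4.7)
The statement is verbatim that of Theorem \ref{hausdorff3}; presumably the intent of the section is the $\ahn$ analogue promised by the section title, but as written the two theorems assert the same thing. Since the $\mathbb{H}^3$ case is already established in Section \ref{cantorvol}, one option is to simply invoke that construction. My plan is to recast the proof in the generalized Sierpinski carpet language just introduced, both as a consistency check and as the template for the intended higher-dimensional extension.

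First I would specialize the carpet construction to $n=1$. For $\beta \in (0,1)$, pick integers $N$ and $|M|$ with $M \subset \{1,\ldots,N\}$ and $\log|M|/\log N = \beta$ whenever such a pair exists; by Theorem \ref{sierpdim}, $K(M) \subset [0,1]$ then has Hausdorff dimension exactly $\beta$. Since $\{\log|M|/\log N\}$ is only a countable dense subset of $(0,1)$, for the remaining $\beta$ I would fall back on the continuous family $C(\alpha)$ of Section \ref{cantorvol} (with $\alpha = 2^{-1/\beta}$), which covers all dimensions in $(0,1)$. Identify the resulting one-dimensional carpet (or Cantor set) bi-Lipschitz\-ly with a subset of the equator of $\partial B^3(0,1)$ and set
\begin{equation*}
S_\beta \;=\; K(M) \,\cup\, \{(0,0,1),(0,0,-1)\}.
\end{equation*}

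Next I would bound the hyperbolic volume of the convex hull of $S_\beta$. The complement of $K(M)$ on the equator is a disjoint union of open arcs; at carpet step $k$ the construction introduces $|M|^{k-1}(N-|M|)$ new arcs, each of angular width $O(N^{-(k-1)})$. Each such arc, joined to the two poles, spans an ideal tetrahedron $T_\theta$ with $\theta = O(N^{-(k-1)})$, and these tetrahedra tile the hyperbolic convex hull of $S_\beta$ in the Klein model (exactly as in Section \ref{cantorvol}). Lemma \ref{vol3} then yields
\begin{equation*}
\vol(\mathrm{hull}(S_\beta)) \;\leq\; \sum_{k=1}^\infty |M|^{k-1}(N-|M|)\cdot C\, N^{-(k-1)} \log N^{k-1}
\;\lesssim\; (\log N)\sum_{k=1}^\infty k\left(\tfrac{|M|}{N}\right)^{k-1}.
\end{equation*}
Since $\beta < 1$ forces $|M|/N \leq N^{\beta-1} < 1$, this series converges and the convex hull has finite volume.

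The main obstacle is not the volume estimate, which reduces directly to $|M|/N < 1$ via Lemma \ref{vol3}, but the combinatorial bookkeeping required to check that the ideal tetrahedra produced at successive carpet stages meet only along shared faces, so that the volume really decomposes as the orderly sum above. Since the combinatorics is identical to that of Section \ref{cantorvol} (only the branching ratio changes from $2$ to $|M|$) this is routine. A second, more cosmetic, obstacle is achieving every irrational $\beta$ on the nose; the Sierpinski carpet framework alone gives only a countable dense set of dimensions, which is why the continuous $C(\alpha)$ family of Section \ref{cantorvol} is needed as a fallback.
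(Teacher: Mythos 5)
Your proposal is correct and follows essentially the same route as the paper: the statement of Theorem \ref{hausdorffn} is indeed a verbatim duplicate of Theorem \ref{hausdorff3}, and the proof the paper intends is exactly the Cantor-set-on-the-equator construction of Section \ref{cantorvol} (the surrounding text merely asserts the higher-dimensional analogue \emph{mutatis mutandis}). Your recasting in the carpet language --- the same suspension-by-ideal-tetrahedra decomposition, Lemma \ref{vol3}, and the convergent series $\sum_k k\,(|M|/N)^{k-1}$, plus the fallback to the continuous family $C(\alpha)$ to realize every $\beta$ exactly --- is a faithful (and slightly more careful) version of the same argument.
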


\bibliographystyle{plain}
\bibliography{rivin}

\begin{thebibliography}{10}

\bibitem{colboishilbert}
Bruno Colbois and Patrick Verovic.
\newblock Hilbert geometry for strictly convex domains.
\newblock {\em Geom. Dedicata}, 105:29--42, 2004.

\bibitem{gardnerbp}
R.~J. Gardner.
\newblock A positive answer to the {B}usemann-{P}etty problem in three
  dimensions.
\newblock {\em Ann. of Math. (2)}, 140(2):435--447, 1994.

\bibitem{grin}
Eric Grinberg and Igor Rivin.
\newblock Infinitesimal aspects of the busemann-petty problem.
\newblock {\em Bulletin of the London Mathematical Society}, 22(5):478--484,
  1990.

\bibitem{klartagmest}
Bo'az Klartag.
\newblock A geometric inequality and a low {$M$}-estimate.
\newblock {\em Proc. Amer. Math. Soc.}, 132(9):2619--2628 (electronic), 2004.

\bibitem{koldobskybp}
Alexander Koldobsky.
\newblock The {B}usemann-{P}etty problem via spherical harmonics.
\newblock {\em Adv. Math.}, 177(1):105--114, 2003.

\bibitem{mattilabook}
Pertti Mattila.
\newblock {\em Geometry of sets and measures in {E}uclidean spaces}, volume~44
  of {\em Cambridge Studies in Advanced Mathematics}.
\newblock Cambridge University Press, Cambridge, 1995.
\newblock Fractals and rectifiability.

\bibitem{milschechtbook}
Vitali~D. Milman and Gideon Schechtman.
\newblock {\em Asymptotic theory of finite-dimensional normed spaces}, volume
  1200 of {\em Lecture Notes in Mathematics}.
\newblock Springer-Verlag, Berlin, 1986.
\newblock With an appendix by M. Gromov.

\bibitem{milnorvol}
John~W. Milnor.
\newblock {\em How to compute volume in hyperbolic space}, volume~1.
\newblock Publish or Perish, Houston, Texas, 1994.

\bibitem{ann2}
Igor Rivin.
\newblock Euclidean structures on simplicial surfaces and hyperbolic volume.
\newblock {\em Annals of Mathematics (ser. 2)}, 139(3):553--580, May 1994.

\bibitem{zhangpos}
Gaoyong Zhang.
\newblock A positive solution to the {B}usemann-{P}etty problem in {$\bold R\sp
  4$}.
\newblock {\em Ann. of Math. (2)}, 149(2):535--543, 1999.

\end{thebibliography}
\end{document}